\theoremstyle{plain}
\newtheorem{theorem}{Theorem}[section]
\newtheorem{corollary}[theorem]{Corollary}
\newtheorem{proposition}[theorem]{Proposition}
\newtheorem{lemma}[theorem]{Lemma}
\theoremstyle{definition}
\newtheorem{definition}[theorem]{Definition}
\newtheorem{example}[theorem]{Example}
\newtheorem{remark}[theorem]{Remark}
\DeclareMathOperator{\gen}{gen}
\DeclareMathOperator{\red}{red}
\DeclareMathOperator{\reg}{reg}
\newcommand{\enm}[1]{\ensuremath{#1}}          %
\newcommand{\cal}[1]{\mathcal{#1}}
\renewcommand{\bar}[1]{\overline{#1}}
\newcommand{\CC}{\enm{\mathbb{C}}}
\newcommand{\NN}{\enm{\mathbb{N}}}
\newcommand{\RR}{\enm{\mathbb{R}}}
\newcommand{\PP}{\enm{\mathbb{P}}}
\newcommand{\LL}{\enm{\mathbb{L}}}
\newcommand{\KK}{\enm{\mathbb{K}}}
\newcommand{\Cc}{\enm{\cal{C}}}
\newcommand{\Ii}{\enm{\cal{I}}}
\newcommand{\Ss}{\enm{\cal{S}}}
\renewcommand{\phi}{\varphi}
\renewcommand{\theta}{\vartheta}
\renewcommand{\epsilon}{\varepsilon}
\renewcommand{\to}[1][]{\xrightarrow{\ #1\ }}
\newcommand{\old}[1]{}
\date{}
\title{Labels of real projective varieties}
\author{Edoardo Ballico and Emanuele Ventura}
\address{Universit\`a di Trento, 38123 Povo (TN), Italy}
\email{edoardo.ballico@unitn.it}
\address{Dept. of Mathematics, Texas A\&M University,
College Station, TX 77843-3368, USA}
\email{eventura@math.tamu.edu, emanueleventura.sw@gmail.com}
\keywords{Admissible rank; Typical labels; Semialgebraic sets; Real algebraic varieties.}
\subjclass[2010]{(Primary) 14P10, 14N05}
\begin{document}

\maketitle

\begin{abstract}

Let $X$ be a complex projective variety defined over $\mathbb R$. Recently, Bernardi and the first author introduced the notion of admissible rank with respect to $X$. This rank takes into account only decompositions that are stable under complex conjugation. Such a decomposition carries a label, i.e., a pair of integers recording the cardinality of its totally real part. We study basic properties of admissible ranks for varieties, along with special examples of curves; for instance, for rational normal curves admissible and complex ranks coincide. Along the way, we introduce the scheme theoretic version of admissible rank. 
Finally, analogously to the situation of real ranks, we analyze typical labels, i.e., those arising as labels of a full-dimensional Euclidean open set. We highlight similarities and differences with typical ranks. 
\end{abstract}

\section{Introduction}

Given a projective variety $X\subset \PP^r$ defined over an arbitrary field, one defines the $X$-rank for every point in its ambient space. 
This notion generalizes the more classical tensor ranks and Waring ranks, when $X$ is a Segre and a Veronese variety, respectively. 
Nowadays, ranks show up very frequently in several different applications ranging from algebraic complexity to quantum information theory \cite{Lands}. 

Often ranks are defined over the complex or the real numbers, where quite different and interesting phenomena appear. Following the recent works \cite{bb, bal}, here we consider an intermediate rank between the two, the so-called {\it admissible rank}. The first datum is a complex projective variety $X \subset \PP^r$ that is {\it defined over $\RR$}. Informally, this is equivalent to $X$ being defined by finitely many homogeneous polynomials with real coefficients. For any real point $q\in \PP^r(\RR)$, Bernardi and the first author \cite{bb} defined the admissible rank of $q$ with respect to $X$ to be the smallest finite subset $S\subset X$ that is stable under complex conjugation. Such 
a stable set $S$ carries a label (Definition \ref{basic}), i.e., a pair of integers recording the structure of $S$. Whence our aim is to investigate what are the labels that can possibly arise from $X$ and which fundamental properties they possess. Perhaps the most interesting among these labels are those arising in open Euclidean subsets of the ambient real projective space $\PP^r(\RR)$, the {\it typical labels}; see \S 3 and \S 4. We highlight differences and similarities that we find between typical labels and typical ranks; the latter ones have been intensively studied in recent years; see for instance \cite{abc, bb, bbo, b, BrSt, mm,mmsv, v}. 

Besides being interesting on its own, we believe the study of admissible ranks could shed light on questions about $X$-ranks over the real or complex numbers without any sort of restrictions on the decompositions one looks for.  \\

\noindent {\bf Contributions and structure of the paper.} In \S 2, we first discuss our setting and basic definitions. Proposition \ref{eo2} recalls an upper bound on the admissible rank of any point with respect to $X(\CC)\subset \PP^r(\CC)$; interestingly, the same bound holds over $\CC$ but is known to fail over $\RR$. In Proposition \ref{oo1}, we show an analogous result for complex space curves $X(\CC)$ equipped with a real structure such that $X(\RR)\neq \emptyset$. 
In Proposition \ref{upperbound2g}, we derive a {\it Blekherman-Teitler type} upper bound on the maximum admissible rank; this is the same as the one for maximum complex rank. 

As for usual ranks, we introduce a scheme-theoretic version of admissible rank, the {\it admissible cactus rank} (Definition \ref{labelcactus}). Along the same lines, we define the {\it scheme-label} of a zero-dimensional scheme (Definition \ref{labelforscheme}). We study scheme-labels under a specific regime described in Remark \ref{e0}; see Proposition \ref{e1}. 

In \S 3, we focus on {\it typical labels}, i.e., those arising as labels of non-empty Euclidean subsets in the ambient space. Alike typical ranks, Proposition \ref{typicallabels} establishes that all labels $(a,b)$ with weight $2a+b=g$, where $g$ is the complex generic rank of $X(\CC)\subset \PP^r(\CC)$, are typical. However, there exist instances where a typical label $(a,b)$ might have weight $2a+b>g$, as Theorem \ref{v1} shows:
a linearly normal elliptic curve defined over $\RR$ provides such an example. On the contrary, under a numerical assumption on degree and arithmetic genus, Theorem \ref{w11} states that an integral nondegenerate space curve with only planar singularities has no typical label $(a,b)$ with weight $2a+b>2=g$. 

\S 4 is devoted to rational normal curves. Theorem \ref{o1} shows that admissible rank and complex rank coincide in this case. Moreover, 
cactus and admissible cactus rank are equal as well. A corollary of this result, already noticed in \cite{bb}, is that typical labels for the rational normal curve
are only those of the form $(a,b)$ with $2a+b = g$, where $g$ is its complex generic rank (Corollary \ref{o2}). Theorem \ref{o3} shows that, albeit the linearly normal embedding of a plane conic $\mathcal C = \lbrace x^2+y^2+z^2=0 \rbrace\subset \PP^2(\CC)$ has no real points, it shares the same rank properties of the usual rational normal curve. 

We introduce the notion of {\it admissible rank boundaries} in Definition \ref{boundary}, following the real case; see e.g. \cite{mmsv,BrSt}. In Remark \ref{component}, we find an irreducible component of this boundary for rational normal curves. Example \ref{twolabels} shows the existence of a point having {\it minimal}
admissible decompositions with distinct labels. 

Finally, in \S 5, we introduce {\it real joins} of semialgebraic sets and prove Theorem \ref{eu1}; this may be viewed as a (weaker) version of \cite[Theorem 3.1]{bhmt} over the reals. 

\vspace{3mm}
\begin{small}
\noindent {\bf Acknowledgements.} The first author was partially supported by MIUR and GNSAGA of INdAM (Italy). The second author would like to thank the Department of Mathematics of Universit\`{a} di Trento for the warm hospitality. 
\end{small}

\section{Preliminaries}
Let $X$ be a projective variety defined over an arbitrary field $\KK$. For a subfield
$\LL \subseteq \KK$, the set of $\LL$-points of $X$ is denoted $X(\LL)$. The set of smooth $\LL$-points of $X$ 
is $X_{\reg}(\LL)$ and the singular locus $X_{\textnormal{sing}}(\LL)$. In the following, $\KK = \CC$ and $\LL = \RR$.

Let $X(\CC)\subset \PP^r(\CC)$ be an integral and nondegenerate complex projective variety.
The pair consisting of $X(\CC)$ and the given embedding $X(\CC )\hookrightarrow \PP^r(\CC)$ is {\it defined over} $\RR$ if and only if
$\sigma (X(\CC)) = X(\CC)$, where $\sigma: \PP^r(\CC) \to \PP^r(\CC)$ is the usual complex conjugation map $[z_0:\dots :z_r]\mapsto
[\bar{z_0}:\dots :\bar{z_r}]$. Define their {\it totally real} parts to be: 
\[
\PP^r(\RR) =\{z\in \PP^r(\CC)\mid \sigma (z)=z\} \mbox{ and } X(\RR) = X(\CC)\cap \PP^r(\RR) =\{z\in X(\CC)\mid \sigma (z)=z\}.
\]

More generally, for any closed subscheme $Z\subseteq \PP^r(\CC)$, the pair consisting of $Z$ and the given embedding is defined over $\RR$  if and only if its defining homogeneous ideal $I_Z$ may be generated by homogeneous polynomials with real coefficients only. 

Following the recent works \cite{bal, bb}, a \emph{label} is a pair $(a,b)\in \NN^2\setminus \{(0,0)\}$. The {\it weight} of a label
$(a,b)$ is the integer $2a+b$. The cardinality of a finite set $S$ is $\sharp (S)$.

\begin{definition}\label{basic}
A finite set $S \subset X(\CC)$, $S\ne \emptyset$, is said to have a \emph{label} if $\sigma (S)=S$. 
If $S$ has a label and the cardinality of its totally real part is $b =\sharp (S\cap X(\RR ))$, then the label of $S$ is $(\left(\sharp (S)-b\right)/2,b)$. 
\end{definition}

For any finite subset $S\subset X(\CC)$, let $\langle S\rangle _{\CC}$ denote the minimal complex linear subspace
of $\PP^r(\CC)$ containing $S$. Set $\langle S\rangle _{\RR}= \langle S\rangle _{\CC}\cap \PP^r(\RR)$. If $\sigma (S) = S$, then one has
$\sigma (\langle S\rangle _{\CC}) =\langle S\rangle _{\CC}$ and hence $\dim _{\CC}\langle S\rangle _{\CC}
= \dim _{\RR}\langle S\rangle _{\RR}$. 

\begin{definition}
Let $X(\KK)$ be a projective irreducible nondegenerate variety over a field $\KK$. The $X(\KK)$-rank of a given $p\in \PP^r(\KK)$, denoted
$r_{X(\KK)}(p)$, is the minimal cardinality of a subset $S\subset X(\KK)$ such that $p\in \langle S\rangle _{\KK}$. 
\end{definition}

For any integer $k>0$, the $k$-th secant variety $\sigma _k(X(\CC ))$ of $X(\CC)$
is the closure in $\PP^r(\CC)$ of the union of all the linear spaces $\langle S\rangle _{\CC}$ with $S\subset X(\CC)$ and $\sharp(S)=k$. If
$X(\CC)\subset \PP^r(\CC)$ is defined over $\RR$, then the variety $\sigma _k(X(\CC ))$ is defined over $\RR$ and
$\sigma _k(X(\CC))\cap \PP^r(\RR)$ is the set $\sigma _k(X(\CC ))(\RR)$ of the real points of $\sigma _k(X(\CC ))$. 
Often $\sigma _k(X(\CC ))(\RR)$ is bigger than the set $\sigma _k(X(\RR))$, i.e., the closure in
$\PP^r(\RR)$ of all the points with $X(\RR)$-rank $k$. In other words, a linear combination of complex points could be real. 
 
\begin{definition}[{\bf \cite[Definition 13]{bb}}]\label{basic1}
Let $X(\CC)\subset \PP^r(\CC)$ be a projective irreducible nondegenerate variety defined over $\RR$. For any $q\in \PP^r(\RR)$, the {\it admissible rank} of $q$, denoted $\ell _{X(\CC),\sigma}(q)$, is the minimal cardinality of a finite subset $S\subset X(\CC)$ such that $\sigma (S)=S$ and $q\in \langle S\rangle _{\RR}$.
Let $\Ss (X(\CC), q)$ denote the set of all $S\subset X(\CC)$ such that $\sigma (S)=S$,
$\sharp (S) =\ell _{X(\CC),\sigma}(q)$ and $q\in \langle S\rangle _{\RR}$. 
\end{definition}

From Definition \ref{basic1}, we have the next observation: 

\begin{remark}\label{a1}
For any $q\in \PP^r(\RR)$, we have 
\[
r_{X(\CC)}(q) \le \ell _{X(\CC),\sigma}(q) \le 2r_{X(\CC)}(q).
\]
One has $\ell _{X(\CC),\sigma}(q) = r_{X(\RR)}(q)$ if and only if
$(0,\ell _{X(\CC),\sigma})$ is a label of a decomposition of $q$ in $\Ss (X(\CC), q)$, i.e., it is of {\it minimal weight}. 
\end{remark}

\begin{lemma}\label{a3}
Let $X(\CC)\subset \PP^r(\CC)$ be an integral and nondegenerate complex projective variety defined over $\RR$ with $n = \dim
X(\CC)$ and $d = \deg (X(\CC))$. Fix $q\in \PP^r(\RR)\setminus X(\RR)$.
\begin{enumerate}
\item[(i)] If $n\ge 2$, there is a hyperplane $H\subset \PP^r(\CC)$ defined over $\RR$, with $q\in H$, such that the intersection $Y(\CC)= X(\CC)\cap
H$ is integral of degree $d$, spans $H$ and $Y_{\reg}(\CC)  = X_{\reg}(\CC) \cap H$. Moreover $Y(\CC)$ is defined over $\RR$ and $Y(\RR) = X(\RR)\cap H$;

\item[(ii)] If $n=1$, there is a hyperplane $H\subset \PP^r(\CC)$ defined over $\RR$, with $q\in H$ and $S= X(\CC)\cap H$
consisting of $d$ distinct points, $\sigma (S)=S$ and $\langle S\rangle _{\CC} =H$.
\end{enumerate}
\end{lemma}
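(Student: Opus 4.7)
My strategy is to apply Bertini-type theorems to the real linear system $L_q := \{H \in (\PP^r(\CC))^* : q \in H\}$ of hyperplanes through $q$, then pass from a complex generic member to a real one via the following density fact: any non-empty Zariski open subset of a projective space defined over $\RR$ contains a real point, because a non-zero real polynomial cannot vanish identically on a real affine space. Note that $L_q$ is an $(r-1)$-dimensional linear subspace of the dual projective space, defined over $\RR$ since $q\in\PP^r(\RR)$, and its restriction to $X(\CC)$ is base-point-free since $q\notin X(\CC)$.

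For part (i), with $n\ge 2$, Bertini's irreducibility theorem (in characteristic zero, applied to a base-point-free linear subsystem of positive dimension on an integral variety of dimension $\ge 2$) together with the standard Bertini smoothness theorem produce a non-empty Zariski open $U\subset L_q$ such that for $H\in U$, $Y=X(\CC)\cap H$ is integral of dimension $n-1$ with $Y_{\reg}=X_{\reg}(\CC)\cap H$. Degree preservation $\deg Y = d$ and spanning $\langle Y\rangle_\CC = H$ are standard for hyperplane sections of nondegenerate varieties of dimension $\ge 2$. Since $U$ is defined over $\RR$, the density fact yields $H\in U(\RR)$, and then $Y$ is defined over $\RR$ with $Y(\RR) = X(\RR)\cap H$.

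For part (ii), with $n=1$, project from $q$: the morphism $\pi_q|_X\colon X(\CC)\to X'(\CC):=\pi_q(X(\CC))$ is finite of some degree $e\ge 1$ onto an integral nondegenerate curve $X'\subset \PP^{r-1}$ defined over $\RR$, of degree $d/e$. Harris's general position theorem (characteristic zero) applied to $X'(\CC)$ yields a non-empty Zariski open $V\subset(\PP^{r-1})^*$ of hyperplanes $H'$ such that $X'(\CC)\cap H'$ consists of $d/e$ distinct smooth points in linearly general position in $H'$. Under the natural isomorphism $(\PP^{r-1})^*\xrightarrow{\sim} L_q$, $H'\mapsto H = \pi_q^{-1}(H')\cup\{q\}$, $V$ pulls back to a non-empty Zariski open $U\subset L_q$ defined over $\RR$; shrinking $U$ further so that $H$ avoids the finite singular locus of $X(\CC)$ and the ramification locus of $\pi_q|_X$, the set $S=X(\CC)\cap H$ consists of $e\cdot (d/e) = d$ distinct smooth points with $\pi_q(S)=X'(\CC)\cap H'$. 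For the spanning $\langle S\rangle_\CC = H$: if $e\ge 2$, each fiber of $\pi_q|_H$ above a point of $X'\cap H'$ contains two distinct collinear points of $S$ together with $q$, forcing $q\in\langle S\rangle_\CC$, and then the cone from $q$ over $\langle\pi_q(S)\rangle_\CC = H'$ equals $H$; if $e=1$, the set of $H\in L_q$ for which $S$ lies in a codimension-$2$ linear subspace of $\PP^r$ missing $q$ has dimension at most $2(r-1)-d\le r-2$, by a parameter count on the Grassmannian of codimension-$2$ subspaces of $\PP^r$ meeting $X$ in $\ge d$ points (using $d\ge r$ for nondegeneracy), hence is a proper closed subvariety of $L_q$. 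Apply the density fact to pick $H\in U(\RR)$; automatically $\sigma(S)=S$ and $\langle S\rangle_\CC = H$.

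The main technical hurdle is the spanning condition $\langle S\rangle_\CC = H$ in (ii), especially when $\pi_q|_X$ is birational ($e=1$): transversality and reducedness fall to standard Bertini, but verifying that $S$ linearly spans $H$ requires either Harris's general position theorem combined with the cone argument, or a direct dimension count on the Grassmannian of codimension-$2$ linear subspaces containing $d$ points of $X(\CC)$.
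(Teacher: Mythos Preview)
For part (i) your argument matches the paper's: Bertini on the linear system $L_q$, then Zariski density of $\PP^{r-1}(\RR)$ in $\PP^{r-1}(\CC)$ to extract a real member. The paper spells out the spanning claim via the ideal-sheaf sequence $0\to\Ii_X\to\Ii_X(1)\to\Ii_{Y,H}(1)\to 0$: since $h^0(\Ii_X(1))=0$ (nondegeneracy) and $h^1(\Ii_X)=0$ ($X$ integral), one gets $h^0(\Ii_{Y,H}(1))=0$, i.e.\ $Y$ spans $H$. You call this ``standard,'' which is fair.

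For part (ii), however, you take a considerably longer route than the paper. The paper simply says ``proven as (i)'': Bertini gives reducedness of $X\cap H$ for generic $H\in L_q$, and the \emph{same} cohomological computation above yields $\langle X\cap H\rangle_\CC=H$ --- in fact for \emph{every} hyperplane, with no genericity needed for the spanning. Your detour through the projection $\pi_q$, Harris's general position theorem, and a case split on the degree $e$ of $\pi_q|_X$ is therefore unnecessary. More to the point, your dimension count in the $e=1$ case (``at most $2(r-1)-d$ by a parameter count on the Grassmannian'') is not justified as written: $d$ points of $X$ need not impose independent conditions on codimension-$2$ subspaces, so a naive count does not directly give that bound. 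The conclusion is nonetheless correct, and indeed the locus of codimension-$2$ subspaces $\Lambda$ with $\deg(\Lambda\cap X)\ge d$ is \emph{empty}: for any hyperplane $H\supset\Lambda$ one would have $X\cap H=\Lambda\cap X$ (both of degree $d$, one contained in the other), whence $X=\bigcup_{H\supset\Lambda}(X\cap H)=\Lambda\cap X$ is zero-dimensional, a contradiction. So your strategy can be repaired, but the paper's uniform sheaf-theoretic argument is both shorter and sharper.
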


\begin{proof}
(i). Let $\Pi (\CC)$ be the set of all hyperplanes $M\subset \PP^r(\CC)$ containing $q$. Thus $\Pi(\CC)$ is an $(r-1)$-dimensional
projective space. Since $q\in \PP^r(\RR)$, $\Pi (\CC)$ is defined over $\RR$ and $\Pi (\RR)$ is an $(r-1)$-dimensional real projective
space. By Bertini's theorem, there is a non-empty Zariski open subset $V \subset \Pi (\CC)$ such that for all $H\in V$ the algebraic set $X_{\reg}(\CC) \cap H$ is smooth and of dimension $\dim X(\CC) -1$, it is irreducible if $\dim X(\CC)>1$
and $\dim (X(\CC)\cap H)_{\textnormal{sing}} = \dim (X_{\textnormal{sing}}(\CC)) -1$ (with the convention $\dim(\emptyset)=-1$); see \cite[Theorem II.8.18, Remarks II.8.18.1 and III.7.9.1]{h}. Since $\PP^{r-1}(\RR)$ is Zariski dense in $\PP^{r-1}(\CC)$, there exists a hyperplane $H\in V\cap \PP^{r-1}(\RR)$ defined over $\RR$ with the properties above. To show that $Y(\CC)$ spans $H$, consider the exact sequence of sheaves
\[
0\to \Ii _{X(\CC)} \to \Ii _{X(\CC)}(1) \to \Ii_{Y(\CC),H}(1)\to 0.
\]
We have to show that $h^0(\Ii_{Y(\CC),H}(1)) = 0$. This follows from the cohomology exact sequence of the sequence above using that $h^0(\Ii _{X(\CC)}(1)) = 0$ and $h^1( \Ii _{X(\CC)}) = 0$. \\
Statement (ii) is proven as (i). 
\end{proof}

A companion rank to the admissible one is the following: 

\begin{definition}
Let $X(\CC)\subset \PP^r(\CC)$ be a projective irreducible nondegenerate variety defined over $\RR$. For any $q\in \PP^r(\RR)$, its {\it open admissible rank} is the minimal cardinality $k>0$ with the following property: for each Zariski closed proper subset $B\subsetneq X(\CC)$ there exists a set $S\subset X(\CC)$ such that $\sigma (S)=S$, $\sharp (S)=k$ and $q\in \langle S\rangle _{\RR}$.
\end{definition}

The proof of the next result is essentially the proof of \cite[Proposition 5.1]{lt}. We spell this out for sake of completeness and because a similar statement is false for the real rank (even when $X_{\reg}(\RR)\ne \emptyset$); see \cite[\S 1]{bs}.

\begin{proposition}\label{eo2}
Let $X(\CC)\subset \PP^r(\CC)$ be an integral and nondegenerate variety defined over $\RR$. Let $n = \dim X(\CC)$. Then

\begin{enumerate}

\item[(i)] $\ell _{X(\CC),\sigma}(q) \le r-n+2$ for all $q\in \PP^r(\RR)$; 

\item[(ii)]  $\ell _{X(\CC),\sigma}(q) \le r-n+1$ for all $q\in \PP^r(\RR)$ if either $r-n$ is odd or $X_{\reg}(\RR)\ne
\emptyset$;

\item[(iii)] The same conclusions hold for the {\it open admissible rank} of $q\in \PP^r(\RR)$. Fix $q\in \PP^r(\RR)$ and fix an arbitrary union $B\subset X(\CC)$ of finitely proper closed subvarieties defined over $\CC$. Then there is a set $S\subset X(\CC)\setminus B$ such that $\sharp (S) \le r-n+2$ (or $\sharp (S) \le r-n+1$), with $\sigma (S) =S$, and $q\in \langle S\rangle$. 
\end{enumerate}
\end{proposition}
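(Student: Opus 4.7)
The plan is to follow the approach of \cite[Proposition 5.1]{lt}: reduce from $X$ to an integral nondegenerate real curve via iterated hyperplane sections through $q$, and then produce a small $\sigma$-invariant decomposition of $q$ from a $\sigma$-invariant hyperplane section of the curve.

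Suppose first $q \notin X(\RR)$; otherwise $\{q\}$ realizes $\ell_{X(\CC),\sigma}(q) = 1$ and the conclusions are immediate. For $n \ge 2$, applying Lemma \ref{a3}(i) iteratively $n-1$ times yields an integral nondegenerate real curve $C \subset \PP^{r'}(\CC)$, with $r' := r - n + 1$, with $q$ in its ambient real projective space. Since any $\sigma$-invariant decomposition of $q$ on $C$ is also one on $X$, it suffices to establish the bound $\ell_{C(\CC),\sigma}(q) \le r' + 1$ for the curve.

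By Lemma \ref{a3}(ii), there is a real hyperplane $H' \ni q$ such that $S := C \cap H'$ is a set of $d$ distinct $\sigma$-invariant points spanning $H'$. Fix $\sigma$-equivariant affine lifts $\tilde s_i \in \CC^{r'+1}$ of the $s_i \in S$ and a real affine lift $\tilde q$ of $q$. Among all $\sigma$-invariant $T \subset S$ with $q \in \langle T \rangle_\RR$, take one of minimal cardinality $t$, and let $s_T$ denote the complex linear dimension of $\operatorname{span}_\CC \{\tilde s_i : i \in T\}$. The real affine space of $\sigma$-equivariant coefficients $(\beta_i)_{i \in T}$ satisfying $\sum_i \beta_i \tilde s_i = \tilde q$ has real dimension $t - s_T$. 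Minimality of $T$ forbids killing any $\sigma$-orbit of $T$ inside this affine space; since a real orbit imposes one real linear constraint and a conjugate pair imposes two, an elementary dimension count forces $t - s_T \le 1$. Combined with $s_T \le r'$ (as $\langle T \rangle_\CC \subset H'$ has complex linear dimension $r'$), this gives $t \le r' + 1 = r - n + 2$, which establishes (i).

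For (ii), one has to exclude the extremal case $t = r - n + 2$. When $X_{\reg}(\RR) \ne \emptyset$, choose a real smooth point $p_0 \in X_{\reg}(\RR)$ that is not a vertex of any cone structure on $X$, and project from $p_0$: the image $X' \subset \PP^{r-1}(\CC)$ is integral, nondegenerate, of dimension $n$, defined over $\RR$, and still satisfies $X'_{\reg}(\RR) \ne \emptyset$. Induction on $r - n$ then gives $\ell_{X'(\CC),\sigma}(q') \le r - n$ for $q' := \pi_{p_0}(q)$, and lifting any such decomposition back to $X$ by appending $p_0$ yields $\ell_{X(\CC),\sigma}(q) \le r - n + 1$. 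When $r - n$ is odd, the extremal value $t = r - n + 2$ is odd, forcing the number $b_T$ of real orbits in $T$ to be odd, and hence positive. In this extremal case the unique (up to scale) $\sigma$-equivariant relation $\alpha$ among the $\tilde s_i$ necessarily vanishes on every real orbit of $T$ (otherwise that orbit could be dropped), and the remaining $\sigma$-equivariance constraints $\alpha_{\pi(i)} = \overline{\alpha_i}$ on the complex orbits, combined with the fact that these consist of genuine conjugate pairs (so $\sigma(s_i) \ne s_i$), yield a contradiction.

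Statement (iii) follows by rerunning the previous arguments under the additional Zariski open requirement that the hyperplanes chosen at each reduction step miss the relevant trace of $B$ on the current section. Since $B$ is a proper closed subvariety of $X(\CC)$, this constraint is Zariski open, non-empty, and satisfiable by real hyperplanes, so the final $\sigma$-invariant set $S$ can be arranged to lie in $X(\CC)\setminus B$.

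The main obstacle is the refinement in (ii) under the parity hypothesis $r - n$ odd: one must analyze the extremal configuration carefully and exploit the genuine distinction between real and complex orbits to rule it out. The case $X_{\reg}(\RR) \ne \emptyset$ is handled cleanly by projection and induction, while (i) and (iii) are matters of a dimension count and Bertini-style genericity, respectively.
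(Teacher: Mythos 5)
Your reduction to the curve case by iterating Lemma \ref{a3} through real hyperplanes containing $q$ is exactly the paper's, but the core of your curve-case argument fails at both places where it departs from the paper. First, the claim that minimality of $T$ plus ``a real orbit imposes one real constraint, a pair imposes two'' forces $t-s_T\le 1$ is not a valid deduction: to drop an orbit $o$ you need the affine-linear map $\beta\mapsto\beta_o$ to attain the value $0$ on the real affine solution space $A$, and for a conjugate pair this is a codimension-two condition on an \emph{affine} space, which can be unattainable no matter how large $\dim A$ is. Concretely, take four genuine conjugate pairs $\{\tilde s_k,\overline{\tilde s}_k\}$ spanning $\CC^6$ whose only relations are $\sum_k(\tilde s_k+\overline{\tilde s}_k)=0$ and $\sum_k c_k(\tilde s_k+\overline{\tilde s}_k)=0$ with $c_1,\dots,c_4\in\RR$ distinct: both relations are $\sigma$-equivariant, on the $2$-dimensional space $A$ each coefficient moves only by the real translate $\lambda+\mu c_k$, so its imaginary part is constant, and for a point $q$ built from coefficients $\beta^0$ with all $\operatorname{Im}\beta^0_k\ne 0$ no orbit can ever be dropped although $t-s_T=2$. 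So minimality alone does not prove (i). Second, in your extremal analysis for $r-n$ odd there simply is no contradiction to be had from ``$w$ supported on genuine pairs plus equivariance'': take two genuine pairs and one real point spanning a hyperplane of $\PP^4(\CC)$ (so $r'=4$, $t=5$) with the single equivariant relation $\tilde s_1+\overline{\tilde s}_1-\tilde s_2-\overline{\tilde s}_2=0$; it vanishes on the real orbit, is supported on genuine conjugate pairs, and if $\beta^0_1,\beta^0_2$ are non-real and $\beta^0_e\neq0$ then no coefficient vanishes along $\beta^0+\lambda w$, so $T$ is minimal with $t=r'+1$ and $r'$ even. This is exactly the configuration (ii) requires you to exclude, and it is consistent with everything you invoke; hence (ii) in the odd case is unproven.

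The paper never meets these configurations because its curve step is different: from the $\sigma$-invariant section $H\cap X(\CC)$ it extracts a $\sigma$-invariant subset that \emph{spans the whole hyperplane} $H$ (so $q\in\langle T\rangle_\RR$ is automatic), and the bounds come from cardinality parity of invariant sets: spanning $H\cong\PP^{r'-1}$ needs at least $r'$ points, an invariant set with no real points has even cardinality, so one gets $r'$ points when $r'$ is even (equivalently $r-n$ odd) or when the section contains a real point, and $r'+1$ otherwise. For the branch $X_{\reg}(\RR)\ne\emptyset$ the paper stays inside the slicing framework, choosing each hyperplane to meet $X_{\reg}(\RR)$ transversally so that the final section retains a real point; your alternative route --- projecting from $p_0\in X_{\reg}(\RR)$, inducting, and ``lifting and appending $p_0$'' --- has an additional gap: a real point of the projected decomposition $S'$ need not have a real preimage on $X(\CC)$, since an invariant finite fiber of $\pi_{p_0}$ may consist entirely of conjugate pairs; making the lift invariant requires $\pi_{p_0}$ birational and $S'$ avoiding a bad finite locus, i.e.\ precisely the open-rank refinement (iii), which your induction has not yet supplied. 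Your (iii) itself mirrors the paper (which first replaces $B$ by $B\cup\sigma(B)$ and slices so that $\dim(B\cap H)$ drops), but it inherits the broken curve case. If you wish to keep the solution-space analysis, you must add genuine geometric input about the section points (e.g.\ a general-position statement) rather than relying on minimality alone.
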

\begin{proof}

(i) and (ii). Fix $q\in \PP^r(\RR)$. Let $\Gamma$ be the set of all complex hyperplanes of $\PP^r(\CC)$ containing $q$. The set $\Gamma$ is
an $(r-1)$-dimensional complex projective space defined over $\RR$, as $q\in \PP^r(\RR)$. Let $\Gamma '\subset \Gamma$
denote the closure in $\Gamma$  of the set of hyperplanes $H$ such that $H$ is tangent to $X_{\reg}(\CC)$. The set $\Gamma '$
is closed and irreducible in $\Gamma$ and $\dim \Gamma '\le r-2$. Since $q\in \PP^r(\RR)$, one has $\sigma (\Gamma ') =\Gamma
'$, i.e., $\Gamma '$ is defined over $\RR$. Since $\dim \Gamma '\le r-2$, we have $\Gamma '(\RR)\ne \Gamma(\RR)$.
Fix $H\in \Gamma (\RR)\setminus \Gamma '(\RR)$. Since $H$ and $X(\CC)$ are defined over $\RR$, the scheme $X(\CC)\cap H$ is defined over
$\RR$. Since $H\notin \Gamma '$, $X(\CC)\cap H$ is an integral and nondegenerate subvariety of $H$ (in case $n>1$) or it is a set
of $\deg (X(\CC))$ points of $H$ spanning $H$ by Lemma \ref{a3}. 

If $n=1$, we use that $\sigma (H\cap X(\CC)) = H\cap X(\CC)$ to obtain a subset $S\subseteq H\cap
X(\CC)$ with $\sigma (S)=S$, $S$ spanning $H$, with $\sharp (S) = r$ ($r$ even) or $\sharp (S) \le r+1$ ($r$ odd). If
$r$ is odd, we may take $S$ with $\sharp (S) =r$ if and only if $H\cap X(\RR) \ne \emptyset$. (The latter condition is satisfied
for some $H$ when $X_{\reg}(\RR)\ne \emptyset$.) 

Now assume $n>1$. We may use $H\cap X(\CC)$ and induct on $n$. If $X_{\reg}(\RR) \ne \emptyset$, we take a hyperplane $H$ meeting $X_{\reg}(\RR)$ transversally. 

(iii). Taking $\sigma (B)\cup B$ instead of $B$ we may assume that the closed algebraic subset $B$ of
$X(\CC)$ is defined over $\RR$ (but its irreducible components over $\CC$ themselves may not be defined over $\RR$). We may find $H\in \Gamma (\RR) \setminus \Gamma '(\RR)$ such that $\dim B\cap H \le n-2$. Thus by induction on $n$, we reduce to the case $n=1$ in which $B\cap H =\emptyset$, as desired. 
\end{proof}

\begin{proposition}\label{oo1}
Let $X(\CC)\subset \PP^3(\CC)$ be an integral and nondegenerate curve such that $X(\RR)\ne \emptyset$.
Fix $p\in X(\RR)$. Let $s$ be the number of branches of $X(\CC)$ at $p$. Then there is a set $A\subset \PP^3(\RR)$, which is a union of at most $s$ real line,
such that the following holds:

\begin{enumerate}
\item[(i)] $\ell _{X(\CC),\sigma}(q)\le 3$ for all $q\in \PP^3(\RR)\setminus A$;

\item[(ii)] For each $q\in \PP^3(\RR)\setminus A$, there is $S\subset X(\CC)$ such that $\sharp (S) \leq 3$, $\sigma (S) =S$ and $q\in \langle S\rangle _{\CC}$.
\end{enumerate}
\end{proposition}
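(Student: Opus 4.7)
The plan is to project $X(\CC)$ from $p$ and analyze a pencil of real lines in the image plane. Let $\pi_p : \PP^3(\CC)\setminus\{p\} \to \PP^2(\CC)$ be the projection, defined over $\RR$ since $p\in\PP^3(\RR)$, and let $Y = \overline{\pi_p(X(\CC)\setminus\{p\})} \subset \PP^2(\CC)$ be the image curve. Nondegeneracy of $X$ prevents $Y$ from being contained in a line (else $X \subset \pi_p^{-1}(Y)$ would be planar), so $\deg(Y) \ge 2$.

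Let $L_1,\ldots,L_s \subset \PP^3(\CC)$ be the tangent lines to the $s$ branches of $X$ at $p$; they pass through $p$ and are permuted by $\sigma$. I define
\[
A = \bigcup_{\sigma(L_i) = L_i} L_i,
\]
the union of the $\sigma$-invariant (hence real) tangent lines. Any non-$\sigma$-invariant $L_i$ meets $\PP^3(\RR)$ only at $p$, since $z \in L_i \cap \PP^3(\RR)$ forces $z = \sigma(z) \in \sigma(L_i) \cap L_i = \{p\}$. Hence $A$ is a union of at most $s$ real projective lines in $\PP^3(\RR)$.

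Fix $q \in \PP^3(\RR) \setminus A$; I may assume $q \ne p$, as $\ell_{X(\CC),\sigma}(p) = 1$. Set $q' = \pi_p(q) \in \PP^2(\RR)$ and $\tau_i = \pi_p(L_i \setminus \{p\})$. By the choice of $A$, $q' \ne \tau_i$ for every $i$: if $L_i$ is real then $q \notin L_i$ forces this, while if $L_i$ is complex then $\tau_i \notin \PP^2(\RR)$. The pencil of real lines through $q'$ is a $\PP^1(\RR)$-family, with only finitely many bad members (tangent to $Y$, passing through a singular point of $Y$, or through some $\tau_i$); choose a good real line $M$ through $q'$. Then $M \cap Y$ is a $\sigma$-stable set of $\deg(Y) \ge 2$ distinct smooth points of $Y$, so I can pick a $\sigma$-stable pair $\{y_1, y_2\}\subseteq M\cap Y$ with $y_1 \ne y_2$ (either two reals or a conjugate pair). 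Lifting through $\pi_p|_{X(\CC)}$ to a $\sigma$-stable pair $\{P_1,P_2\} \subset X(\CC) \setminus \{p\}$, the set $S = \{p,P_1,P_2\}$ satisfies $\sharp(S) \le 3$, $\sigma(S) = S$, and $\langle S \rangle_{\CC} = \pi_p^{-1}(M) \ni q$ (since $y_1 \ne y_2$ implies $p, P_1, P_2$ are not collinear). This yields both (i) and (ii).

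The principal obstacle is the $\sigma$-compatible lifting. When $\pi_p|_X$ is birational (the standard situation for integral nondegenerate curves in $\PP^3$), the lifts are unique and automatically $\sigma$-equivariant. In the non-birational case, the conjugate-pair case $y_2=\sigma(y_1)$ is handled by taking $P_2=\sigma(P_1)$ for any choice of $P_1 \in \pi_p^{-1}(y_1) \cap X(\CC)$, while the real-pair case may require selecting $M$ so that $M \cap Y$ also contains a conjugate pair of complex points of $Y$, which needs a subsidiary argument on the real geometry of $Y$.
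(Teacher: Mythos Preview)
Your approach is essentially the paper's: project from $p$, take $A$ to be the real part of the tangent cone (your union of the $\sigma$-stable $L_i$'s is exactly this, since the non-real $L_i$ contribute only $\{p\}$), and reduce to the admissible rank $\le 2$ statement for the plane image curve. Where the paper invokes Proposition~\ref{eo2}(iii) as a black box, you unwind it directly via the pencil of real lines through $q'$; the content is the same. The paper also splits off the easy sub-case $q'\in Y\setminus B$ to get $\sharp(S)=2$, which you absorb into the general argument---either is fine for the stated bound.

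On the lifting issue you flag: the paper simply asserts ``there is $S''\subset X(\CC)\setminus\{p\}$ such that $\pi_p(S'')=S'$ and $\sigma(S'')=S''$'' without further comment, so you are not missing anything the paper supplies. Your treatment is in fact more careful: the conjugate-pair case is completely handled by your $P_2=\sigma(P_1)$ trick, and you correctly isolate the residual concern (real $y_i$ with no real preimage when $\pi_p|_X$ is not birational) rather than sweeping it under the rug.
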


\begin{remark}
Note that in Proposition \ref{oo1} we do not assume $X_{\reg}(\RR)\ne \emptyset$. Perhaps the most interesting situation is when $X$ is singular
and $X(\RR) \subseteq X_{\textnormal{sing}}(\CC)$. It is easy to construct such examples. For instance, starting with the conic $\Cc
=\{x^2+y^2+z^2=0\}\subset \PP^2(\CC)$ defined over $\RR$ but with $\Cc(\RR)=\emptyset$ we construct a nodal curve $X(\CC)$
with $\Cc$ as its normalization, defined over $\RR$, nodal, with $\sharp (X_{\textnormal{sing}}(\CC))=t$ and $X(\RR)
= X_{\textnormal{sing}}(\CC)$ in the following way. Fix $t$ distinct points $q_1,\dots , q_t\in X(\CC)$ such that $\sigma (q_i)\ne
q_j$ for any $i,j$. Let $X(\CC)$ be the nodal curve obtained from $\Cc$ gluing together the points $q_i$ and $\sigma (q_i)$ to
get an ordinary node. 
\end{remark}

\begin{proof}[Proof of Proposition \ref{oo1}:]
It is sufficient to prove statement (ii). For any $o\in \PP^3(\CC)$, let $\pi_o: \PP^3(\CC)\setminus \{o\}\to \PP^2(\CC)$ denote the linear projection from $o$. The morphism $\pi _o$ is defined over $\RR$ if $o\in \PP^3(\RR)$.

Fix $q\in \PP^3(\RR)$. If $q=p$ we may take $S =\{p\}$. Now assume $q\ne p$. Since $q\ne p$, the point $q'= \pi _p(q)\in \PP^2(\RR)$ is well defined. Let $X'(\CC)$ denote the closure of $\pi_p(X(\CC)\setminus \{p\})$ in $\PP^2(\CC)$. The set $X'(\CC)$ is an irreducible plane curve defined over $\RR$ and hence $\ell_{\pi _{X'(\CC),\sigma}}(q') \le 2$, by e.g. Proposition \ref{eo2}(ii). Set $B: = X'(\CC)\setminus \pi _p(X(\CC)\setminus \{p\})$. The set $B$ is finite and defined over $\RR$. Moreover, $\sharp (B)\le s$, i.e., its cardinality is at most the number $s$ of branches of $X(\CC)$ at $p$. 

Assume $q'\notin X'(\CC)$. By Proposition \ref{eo2}(iii), there is $S'\subset X'(\CC)$ such that $\sharp (S') =2$, $S'\cap B =\emptyset$, $\sigma (S')=S'$ and $q'\in \langle S'\rangle$. Thus there is $S''\subset X(\CC)\setminus \{p\}$ such that $\pi _p(S'') =S'$ and $\sigma (S'') =S''$. So $q\in \langle S'', p\rangle$, and $\sharp(S''\cup p) = 3$. If $q'\in (X'(\CC)\setminus B)\cap \PP^2(\RR)$, then there is $q''\in X(\RR)$ with $\pi_p(q'') =q'$; hence $q\in \langle p, q''\rangle$ and so $\sharp(\lbrace p, q''\rbrace) = 2$. 

In conclusion, we see that it is sufficient to take as $A$ the real part of the tangent cone to $X(\CC)$ at $p$.
\end{proof}

Utilizing the idea in \cite[Theorem 1]{bt}, we derive a Blekherman-Teitler type upper bound on the maximum admissible rank. This is the same as the one for maximum complex rank:

\begin{proposition}\label{upperbound2g}
Let $X(\CC)\subset \PP^r(\CC)$ be an integral and nondegenerate variety defined over $\RR$. Let $g= r_{\gen}(X(\CC))$ be the complex generic rank of $X(\CC)$. 
Then $\ell _{X(\CC),\sigma}(q) \le 2g$ for all $q\in \PP^r(\RR)$.
\end{proposition}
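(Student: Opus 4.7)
The plan is to adapt the Blekherman--Teitler argument from \cite{bt}. Their proof of $r_{X(\CC)}(q)\le 2g$ picks two generic rank-$g$ points on a complex line through $q$. Here we exploit the reality of $q$: we take a single complex (non-real) rank-$g$ point $p$ on a \emph{real} line through $q$ and use its conjugate $\sigma(p)$ as the ``second point.'' The union of a complex decomposition $S$ of $p$ with its conjugate $\sigma(S)$ will then automatically be a $\sigma$-stable subset of $X(\CC)$ of size at most $2g$ whose $\RR$-span contains $q$.

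For the setup, note that since $g=r_{\gen}(X(\CC))$, the constructible set $\{p\in\PP^r(\CC):r_{X(\CC)}(p)\le g\}$ is dense in $\sigma_g(X(\CC))=\PP^r(\CC)$, so it contains a nonempty Zariski open $U\subset \PP^r(\CC)$; as $X(\CC)$ is defined over $\RR$, replacing $U$ by $U\cap \sigma(U)$ one may assume $\sigma(U)=U$. Write $Y=\PP^r(\CC)\setminus U$, a $\sigma$-stable proper Zariski closed subset. The space $\Pi\cong \PP^{r-1}$ of complex lines through $q$ is defined over $\RR$, and the locus $\Pi_Y\subset \Pi$ of lines contained in $Y$ is a proper $\RR$-defined closed subvariety of $\Pi$ (proper because the union of all lines through $q$ is $\PP^r\not\subset Y$). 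Zariski density of $\Pi(\RR)$ in $\Pi(\CC)$ then yields a real line $L\in \Pi(\RR)\setminus \Pi_Y(\RR)$; such $L$ satisfies $\sigma(L)=L$ and $L\cap U\neq \emptyset$. Consequently $L\cap U$ is cofinite in $L\cong \PP^1(\CC)$, while $L(\RR)\cong \PP^1(\RR)$ is only 1-real-dimensional inside $L$, so one may pick a point $p\in (L\cap U)\setminus L(\RR)$.

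To conclude, observe that $\sigma(p)\in L\setminus\{p\}$, hence $\langle p,\sigma(p)\rangle_\CC=L\ni q$. Since $p\in U$, pick $S\subset X(\CC)$ with $\sharp S\le g$ and $p\in\langle S\rangle_\CC$; applying $\sigma$ yields $\sigma(p)\in\langle \sigma(S)\rangle_\CC$. The set $T:=S\cup\sigma(S)$ is $\sigma$-stable of cardinality at most $2g$, and $q\in\langle p,\sigma(p)\rangle_\CC\subset \langle T\rangle_\CC$; since $T$ is $\sigma$-stable and $q$ is real, this upgrades to $q\in\langle T\rangle_\RR$, yielding $\ell_{X(\CC),\sigma}(q)\le 2g$. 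The only delicate point is guaranteeing the existence of a real line $L$ through $q$ meeting $U$, which reduces to the standard Zariski density of the real part of a projective space inside its complexification; beyond that, I see no obstacle.
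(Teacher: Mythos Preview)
Your proof is correct and follows essentially the same approach as the paper's: both find a real line $L$ through $q$ meeting the generic-rank locus $U$ (via Zariski density of $\Pi(\RR)$ in $\Pi(\CC)$), pick a non-real point $p\in L\cap U$, and use $S\cup\sigma(S)$ as the $\sigma$-stable decomposition. Your version is slightly more streamlined in that you go directly to a non-real $p$ using the cofiniteness of $L\cap U$, whereas the paper first picks any $p\in L\cap W$ and then handles the real and non-real cases separately.
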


\begin{proof}
Fix $q\in \PP^r(\RR)$. By the definition of $g$, there is a non-empty Zariski open subset
$U$ of $\PP^r(\CC)$ such that $r_{X(\CC)}(p) =g$ for all $p\in U$. Set $W= U\cup \sigma (U)$. Note that $W$ is a Zariski
open subset of $\PP^r(\CC)$. We have $\sigma (W)=W$ and so $W$ is defined over $\RR$. Since the inclusion
$X(\CC)\subset \PP^r(\CC)$ is defined over $\RR$, we have $r_{X(\CC)}(z) = r_{X(\CC)}(\sigma (z))$ for all $z\in \PP^r(\CC)$. Thus
$r_{X(\CC)}(p) =g$ for all $p\in W$. Let $E(\CC)$ denote the set of all complex lines of $\PP^r(\CC)$ passing through $q$ and
intersecting $W$. This set $E(\CC)$ is a Zariski open subset of the $(r-1)$-dimensional complex projective $\PP^{r-1}(\CC)$
parametrizing all lines of $\PP^r(\CC)$ containing $q$. Since $\sigma (q) =q$ and $\sigma (W)=W$, we have $\sigma (E(\CC)) =
E(\CC)$, i.e., $E(\CC)$ is defined over $\RR$. Moreover, $E(\RR)$ is a Zariski open subset of the set $\PP^{r-1}(\RR)$ of all real
lines of $\PP^r(\RR)$ containing $q$. Since $\PP^{r-1}(\RR)$ is Zariski dense in $\PP^{r-1}(\RR)$ 
and $E(\CC)$ is dense in $\PP^{r-1}(\CC)$, we have $E(\RR)\ne \emptyset$.
Take $L\in E(\RR)$, viewed as a complex line defined over $\RR$. Since $E(\RR )\subset E(\CC)$, there is $p\in
W\cap L$. 

First assume $p\notin \PP^r(\RR)$. The line $L$ is defined over $\RR$ and so $q\in L= \langle \{p,\sigma(p) \}\rangle_{\CC}$. As $\sigma (W) = W$, we have $r_{X(\CC)}(q) = g$. If $S\subset X(\CC)$ with $\sharp (S)=g$ and $p\in \langle S\rangle _{\CC}$, then $\sigma (p)\in \langle \sigma (S)\rangle_{\CC}$ and so $q \in \langle S\cup \sigma (S)\rangle _{\CC}$. Thus $\ell_{X(\CC),\sigma}(q) \le r_{X(\CC)}(p)+r_{X(\CC)}(\sigma (p)) =2g$. 

Now assume $p\in \PP^r(\RR)$. From $p\in W$, it follows that $W\cap L$ is a non-empty Zariski open subset of $L$, i.e., $L$ minus finitely many points. Now, $W\cap L$ contains infinitely many points that are not real. Pick one of them, say $z\in L\cap W$ and so $\sigma(z)\in L\cap W$. Therefore $q\in\langle \{z,\sigma(z) \}\rangle_{\CC}=L$ and hence $\ell _{X(\CC),\sigma}(q) \le 2g$.\end{proof}

\begin{definition}
For all integers $k>0$, set
\[
W^0_k(X(\CC))= \{q\in \PP^r(\CC)\mid r_{X(\CC)}(q) = k\} \mbox{ and } W_k(X(\CC))= \overline{W^0_k(X(\CC))}. 
\]
If $\sigma _{k}(X(\CC))\subsetneq \PP^r(\CC)$, then $W_k(X(\CC)) = \sigma_k(X(\CC))$. 

Similarly, let $W^0_{k,\sigma}(X(\CC))$ be the set of all $q\in \PP^r(\RR)$ such that $\ell _{X(\CC),\sigma}(q) =k$.
For any label $\mu =(a,b)$ of weight $k$, let $W^0_{k,\sigma}(X(\CC),\sigma)_{\mu}$ be the set of all $q\in W^0_{k,\sigma}(X(\CC))_{\mu}$
with $\mu$ as one of its labels. Therefore: 
\[
W^0_{k,\sigma}(X(\CC))= \bigcup_{\mu} W^0_{k,\sigma}(X(\CC))_{\mu}
\]
We may have $W^0_{k,\sigma}(X(\CC))_{\mu}\cap W^0_{k,\sigma}(X(\CC))_{\mu'} \ne \emptyset$ for two
different labels $\mu$ and $\mu '$; see Example \ref{twolabels}. Their closures in Euclidean topology are $W_{k,\sigma}(X(\CC))_{\mu}$ and $W_{k,\sigma}(X(\CC))$ respectively. 
\end{definition}

\begin{remark}
Let $X(\CC)\subset \PP^r(\CC)$ be an integral and nondegenerate variety. 
Since images of semialgebraic sets by algebraic maps are semialgebraic \cite[Proposition 2.2.7]{bcr}, each set
$W^0_{k,\sigma}(X(\CC))_{\mu}$ is semialgebraic. Hence each $W^0_{k,\sigma}(X(\CC))$ is semialgebraic \cite[Proposition
2.2.2]{bcr}. Their closures in Euclidean topology, $W_{k,\sigma}(X(\CC))_{\mu}$ and $W_{k,\sigma}(X(\CC))$ respectively, are semialgebraic as
well \cite[Proposition 2.2.2]{bcr}. 
\end{remark}

\begin{remark}\label{eu0}
Let $E$ be a non-empty real semialgebraic set. Then there is an integer $k>0$ (not uniquely determined)
such that $E =\cup _{i=1}^{k} E_i$ with $E_i$ semialgebraic, $E_i\cap E_j=\emptyset$ for all $i\ne j$, where
each $E_i$ is homeomorphic to an open hypercube $]0,1[^{d_i}$ with $d_i\in \NN$ \cite[Theorem 2.3.6]{bcr}. The dimension of $E$, denoted 
$\dim E$, is the maximum of the integers $d_i$'s. This is the Hausdorff dimension of $E$, i.e., its dimension as locally compact topological space. 
One has $\dim E = \dim \overline{E}$, where $\overline{E}$ is its closure in Euclidean topology. 
\end{remark}

\begin{remark}\label{eur1}
Let $W$ be a real algebraic manifold with pure dimension; typically, $W=\PP^r(\RR)$. Let $A, B\subseteq W$ be semialgebraic sets. 
Let $A^{\circ}$ and $\overline{A}$ denote interior and closure of $A$ respectively. With these notations, we have: 
\begin{itemize}
\item $(\overline{A})^{\circ}\subseteq \overline{A^{\circ}}$ \cite[Lemma 2.1]{bbo};
\vspace{2mm}
\item $\overline{A\cup B} = \overline{A}\cup\overline{B}$; 
\vspace{2mm}
\item $(A\cup B)^{\circ} = A^{\circ}\cup B^{\circ}$. 
\end{itemize}
\end{remark}

\begin{definition}\label{rho}
Let $\KK$ be an arbitrary field. A zero-dimensional scheme $Z\subset \PP^r(\KK)$ is said to be linearly independent if $\dim \langle Z\rangle _{\KK} = \deg(Z) - 1$.
Let $X\subset \PP^r(\KK)$ be an integral and nondegenerate variety defined over $\KK$.  Let $\rho(X)$ (resp. $\rho'(X)$) denote the maximal integer $t>0$ such that each zero-dimensional scheme (resp. each {\it reduced} zero-dimensional scheme) $Z\subset X(\KK)$  with $\deg(Z) = t$ is linearly independent in $\PP^r(\KK)$. It is clear that $\rho(X)\leq \rho'(X)$. 
\end{definition}

\begin{remark}\label{e0}
Assume $\rho (X(\CC)) \ge 2k$ (resp. $\rho (X(\CC))'\ge 2k$). For each $q\in \PP^r(\CC)$ there is at most one zero-dimensional
scheme (resp. finite set) $Z\subset X(\CC)$ such that $\deg (Z)\le k$ and $q\in \langle Z\rangle$. Indeed, on the contrary, suppose 
$q\in \langle Z_1\rangle \cap \langle Z_2 \rangle$, where $Z_1\neq Z_2$; however $\deg(Z_1\cup Z_2) \leq 2k$ and thus $Z_1\cup Z_2$ is linearly independent. 
This is in contradiction with $\langle Z_1\rangle \cap \langle Z_2 \rangle\neq \emptyset$. 

Note that, under these assumptions, $\sigma_k(X(\CC))$ is not defective and so its dimension is $k(\dim X(\CC)+1)-1$.
\end{remark}

\begin{definition}\label{labelcactus}
For any $q\in \PP^r(\CC)$, the {\it cactus rank} $cr_{X(\CC)}(q)$ of $q$ is the minimal degree of a zero-dimensional scheme $Z\subset X(\CC)$ such that $q\in \langle Z\rangle$.
For any $q\in \PP^r(\RR)$ the {\it admissible cactus rank} $cr_{X(\CC),\sigma}(q)$ of $q$ is the minimal degree of a zero-dimensional scheme $Z\subset X(\CC)$ such that $\sigma (Z)=Z$ and $q\in \langle Z\rangle$.
\end{definition}

Another implication of the assumption in Remark \ref{e0} that will be employed is as follows. Recall that given $q\in \PP^r(\CC)$, its {\it border rank} $b_{X(\CC)}(q)$ is
the minimal integer $k$ such that $q\in \sigma_k(X(\CC))$. 

\begin{proposition}\label{crvsbr}
Let $\rho(X)\geq k$. Then, for every $q\in \sigma_k(X(\CC))$, one has
\[
cr_{X(\CC)}(q)\leq b_{X(\CC)}(q). 
\]
\begin{proof}
Let $q\in \sigma_k(X(\CC))$ be such that $b_{X(\CC)}(q)=k$. By definition, $q$ is a limit of an (algebraic) curve of points $p_{\epsilon}$ with $r_{X(\CC)}(p_{\epsilon}) = k$. 
For each $\epsilon$, there exists a finite set $S_{\epsilon}\subset X(\CC)$ of $k$ points such that $p_{\epsilon}\in \langle S_{\epsilon}\rangle$. 
Since the Hilbert scheme $\textnormal{Hilb}_k(X(\CC))$ of schemes of degree $k$ on $X(\CC)$ is projective, there exists a flat limit $S_0$ of the family
$S_{\epsilon}$. Since $\rho(X)\geq k$, all the $S_{\epsilon}$ and their limit $S_0$ are linearly independent. Therefore the limit
of the spans $\langle S_{\epsilon}\rangle$ coincides with the span $\langle S_0\rangle$ (this is not always the case, only one inclusion
holds in general). Hence $q\in \langle S_0\rangle$. 
Thus $cr_{X(\CC)}(q)\leq k = b_{X(\CC)}(q)$.
\end{proof}

\end{proposition}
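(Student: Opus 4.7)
My plan is to realize $q$ as a limit of points with $X$-rank equal to its border rank, then use the Hilbert scheme to take a flat limit of the associated decompositions, and finally use the hypothesis $\rho(X)\ge k$ to certify that passing to the limit preserves membership in the span.

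First I would set $k = b_{X(\CC)}(q)$ and use the definition of border rank: since $q\in \sigma_k(X(\CC))$, there is an algebraic arc $\epsilon\mapsto p_\epsilon$ in $\PP^r(\CC)$ with $p_0 = q$ and $r_{X(\CC)}(p_\epsilon) = k$ for $\epsilon\neq 0$ (shrinking if necessary to avoid the proper closed locus where the rank drops). For each such $\epsilon$ I pick a witness set $S_\epsilon \subset X(\CC)$ with $\sharp(S_\epsilon) = k$ and $p_\epsilon \in \langle S_\epsilon\rangle$.

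Next I would lift this family of subsets to the Hilbert scheme $\textnormal{Hilb}_k(X(\CC))$. Since this Hilbert scheme is projective, after possibly passing to a base change of the parameter $\epsilon$ (or using the valuative criterion of properness) the family $\{S_\epsilon\}$ extends to a flat family over the whole disc, with some zero-dimensional subscheme $S_0\subset X(\CC)$ of degree $k$ as the special fiber. This is the standard mechanism by which border-rank decompositions degenerate to cactus decompositions.

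The main obstacle, and the step where the hypothesis $\rho(X)\ge k$ is indispensable, is to conclude that $q \in \langle S_0\rangle$. In general, if $\langle S_\epsilon\rangle$ has constant dimension $d$ for $\epsilon\neq 0$, the flat limit of these spans inside the Grassmannian $\mathbb{G}(d,r)$ may be a $d$-plane strictly containing $\langle S_0 \rangle$; one only has the inclusion $\langle S_0\rangle \subseteq \lim \langle S_\epsilon\rangle$, and the limit of spans can jump in dimension. However, the assumption $\rho(X)\ge k$ forces every degree-$k$ subscheme of $X(\CC)$ to be linearly independent, so $\dim \langle S_\epsilon\rangle = k-1 = \dim \langle S_0\rangle$ for every $\epsilon$ including $\epsilon = 0$. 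By semicontinuity of dimension of spans this rules out any jumping, so $\lim \langle S_\epsilon\rangle = \langle S_0\rangle$. Since $p_\epsilon \in \langle S_\epsilon\rangle$ for all $\epsilon\ne 0$ and the incidence variety in $\PP^r(\CC)\times \mathbb{G}(k-1,r)$ is closed, the limit $q = p_0$ lies in $\langle S_0\rangle$. This gives a zero-dimensional scheme $S_0\subset X(\CC)$ of degree $k$ containing $q$ in its span, hence $cr_{X(\CC)}(q)\le k = b_{X(\CC)}(q)$, as required.
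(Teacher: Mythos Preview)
Your proof is correct and follows essentially the same approach as the paper: realize $q$ as a limit along an algebraic arc of rank-$k$ points, take a flat limit of the decomposing subsets in $\textnormal{Hilb}_k(X(\CC))$, and use $\rho(X)\ge k$ to ensure that the span of the limit scheme equals the limit of the spans. Your version is slightly more explicit about the base change needed to extend the family and about phrasing the limit of spans via the Grassmannian and the closed incidence correspondence, but these are elaborations of exactly the argument the paper gives.
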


Now we introduce the notion of a label for a zero-dimensional scheme $Z\subset X(\CC)$ defined over $\RR$: 

\begin{definition}\label{labelforscheme}
Let $Z\subset X(\CC)$ be a zero-dimensional scheme. Let $Z_1,\dots ,Z_s$ be its connected components. Set $\{p_i\}= (Z_i)_{\red}$, $1\le i \le
s$, $d= \deg (Z)$ and $d_i= \deg (Z_i)$. We have $d=d_1+\cdots +d_s$ and $S= Z_{\red} =\{p_1,\dots ,p_s\}$.
By definition, $Z$ is defined over $\RR$ if and only if $\sigma (Z)=Z$ ($\sigma$ induces a permutation on connected components of $Z$). We have $\sigma (S)=S$ and hence $S$ has a label $(a,b)$. The {\it scheme-label} of $Z$ is the tuple $(a,b; d_1,\ldots,d_s)$, where the right-hand side records the degrees of its 
connected components, $b$ is the sum of the degrees of the $Z_i$ such that $\sigma (Z_i) =Z_i$ and $2a$ is the sum of the other $d_i$'s. \end{definition}

\begin{remark}
A zero-dimensional scheme $Z$ defined over $\RR$ is connected if and only if it has label $(0,1; d, 0, \ldots, 0)$. Easy examples show that some $Z$ may have subschemes not defined over $\RR$.  
\end{remark}

A class of connected zero-dimensional schemes defined over $\RR$ such that all of their subschemes
are defined over $\RR$ is as follows. Let $Z\subset X(\CC)$ be a connected zero-dimensional scheme. We say that $Z$ is \emph{curvilinear} if its Zariski tangent space has dimension at most $1$. If $Z\subset X_{\reg}(\CC)$ the scheme $Z$ is curvilinear if and only if it is contained in a smooth curve. Since $Z\subset \PP^r(\CC)$, a curvilinear scheme is always contained in a smooth curve contained in $\PP^r(\CC)$. This implies the fact that if $Z$ is connected and curvilinear, then for every integer $t$ such that $1\le t\le \deg (Z)$ there is a unique scheme $Z_t\subset Z$. This uniqueness shows that, if $\sigma (Z) =Z$, then $\sigma (Z_t)=Z_t$. Hence every subscheme of a curvilinear connected zero-dimensional scheme defined over $\RR$ is defined over $\RR$.
A zero-dimensional scheme $Z$ whose connected components are curvilinear is also called curvilinear.

\begin{proposition}\label{e1}
Let $X(\CC)$ be as above with $n=\dim _{\CC} X(\CC)$. Assume $2k\le \rho' (X(\CC))$.  
\begin{enumerate}
\item[(i)] Let $q\in \sigma _k^0(X(\CC))\cap \PP^r(\RR)$. Then $\ell _{X(\CC),\sigma}(q) =k$ and $q$ has a unique label.

\item[(ii)] Assume $X_{\reg}(\RR) \ne \emptyset$ and $2k \le \rho (X(\CC))$. Let $(a,b;d_1,\ldots,d_s)$ be a scheme-label of weight $k$. This label is a scheme-label of some 
$q\in \left(\sigma _k(X(\CC))\setminus \sigma _{k-1}(X(\CC))\right)\cap \PP^r(\RR)$ and the label may be realized by a curvilinear scheme. Such a point $q$
satisfies $cr _{X(\CC)}(q) =k$. 
\end{enumerate}
\end{proposition}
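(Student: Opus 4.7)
The proof splits by part, and both parts rest on the uniqueness provided by Remark \ref{e0}.

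For (i), since $q\in \sigma_k^0(X(\CC))$ there is a reduced set $S\subset X(\CC)$ with $\sharp(S) = k$ and $q\in \langle S\rangle _{\CC}$. The hypothesis $2k\le \rho'(X(\CC))$ together with Remark \ref{e0} forces this minimal-cardinality $S$ to be unique. As $q = \sigma(q)\in \langle \sigma(S)\rangle _{\CC}$, uniqueness yields $\sigma(S) = S$, giving $\ell _{X(\CC),\sigma}(q)\le k$; the reverse inequality $\ell _{X(\CC),\sigma}(q)\ge r_{X(\CC)}(q) = k$ comes from Remark \ref{a1}. The label of $q$ is then the unique label of this unique $S$.

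For (ii) the strategy is first to build a curvilinear $\sigma$-invariant zero-dimensional scheme $Z\subset X(\CC)$ of degree $k$ realizing the prescribed scheme-label $(a,b;d_1,\dots,d_s)$, and then to pick $q$ very general in $\langle Z\rangle _{\RR}$. To build $Z$: for each real component $Z_j$ of degree $d_j$ (degrees summing to $b$), pick distinct smooth real points $p_j\in X_{\reg}(\RR)$, and carve out a smooth curve germ through $p_j$ defined over $\RR$---using $X$ itself when $n=1$ and cutting $X$ by a general real linear subspace of codimension $n-1$ when $n>1$, as in the proof of Lemma \ref{a3}---then take the length-$d_j$ curvilinear subscheme on this germ. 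For each conjugate pair of non-real components pick $p\in X_{\reg}(\CC)\setminus \PP^r(\RR)$ (nonempty because $X(\RR)$ has real dimension at most $n$ inside the complex $n$-dimensional $X_{\reg}(\CC)$), a smooth curve germ through $p$, a length-$d_j$ curvilinear subscheme $Z_j$ on it, and set the partner to be $\sigma(Z_j)\subset X(\CC)$ at $\sigma(p)$. Choosing all data generically makes $Z = \bigsqcup_{j} Z_j$ curvilinear, $\sigma$-invariant, and of the prescribed scheme-label.

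Because $2k\le \rho(X(\CC))$, Remark \ref{e0} ensures $Z$ is linearly independent, so $\dim _{\CC}\langle Z\rangle _{\CC} = k-1$ and $\sigma$-invariance forces $\dim _{\RR}\langle Z\rangle _{\RR} = k-1$. Being curvilinear, $Z$ has only finitely many subschemes $Z'\subsetneq Z$, each with $\langle Z'\rangle _{\CC}\subsetneq \langle Z\rangle _{\CC}$; select $q\in \langle Z\rangle _{\RR}$ outside this finite union of proper subspaces. If some $Z''\subset X(\CC)$ had $\deg(Z'')\le k-1$ and $q\in \langle Z''\rangle _{\CC}$, the bound $\deg(Z\cup Z'')\le 2k-1\le \rho(X(\CC))$ would yield linear independence of $Z\cup Z''$, hence $\langle Z\rangle _{\CC}\cap \langle Z''\rangle _{\CC} = \langle Z\cap Z''\rangle _{\CC}$ with $Z\cap Z''\subsetneq Z$, contradicting the choice of $q$. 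Therefore $cr_{X(\CC)}(q) = k$. Finally, $Z$ is a flat limit of reduced $k$-tuples $S_{\varepsilon}\subset X_{\reg}(\CC)$ obtained by perturbing each component along its germ, and linear independence makes the spans $\langle S_{\varepsilon}\rangle _{\CC}$ converge to $\langle Z\rangle _{\CC}$, yielding $q\in \sigma_k(X(\CC))$; combined with Proposition \ref{crvsbr} and $cr_{X(\CC)}(q) = k$, this gives $q\notin \sigma_{k-1}(X(\CC))$.

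The main technical step is producing a smooth real curve germ on $X$ through every prescribed smooth real point, which is exactly the Bertini-style input of Lemma \ref{a3}; the subsequent independence and genericity arguments are formal consequences of Remark \ref{e0} and Proposition \ref{crvsbr}.
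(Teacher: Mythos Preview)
Your proof is correct and follows essentially the same approach as the paper's: for (i) both use the uniqueness from Remark~\ref{e0} to force $\sigma(S)=S$, and for (ii) both build a $\sigma$-invariant curvilinear scheme $Z$ with the prescribed scheme-label and pick $q$ general in $\langle Z\rangle$ avoiding the spans of the finitely many proper subschemes. Your write-up is in fact more careful than the paper's in several places: you explain how to produce the real smooth curve germ via the Bertini-type slicing of Lemma~\ref{a3}, you explicitly take $q\in\langle Z\rangle_{\RR}$, you give a direct linear-independence argument for $cr_{X(\CC)}(q)=k$, and you spell out why $q\in\sigma_k(X(\CC))\setminus\sigma_{k-1}(X(\CC))$ via a flat degeneration together with Proposition~\ref{crvsbr}; the paper's proof leaves these last points implicit.
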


\begin{proof}
(i). By Remark \ref{e0}, the set $\Ss (X(\CC),q)$ is a singleton:
\[
\Ss (X(\CC),q) = \{S\}.
\]
As $\sigma (q) = q$, then $\sigma (S) =S$. So $\ell _{X(\CC),\sigma}(q) =k$ and $q$ has a unique label, that of $S$.

(ii). Take $d_1,\ldots ,d_e$ such that $d_i>0$ for all $i$, $\sum _{i=1}^{e} d_i =a$ and $\sum _{i=2e+1}^{2e+f} d_i =b$. Write $d_{i+e}= d_i$ for all $i=1,\dots ,e$. 

Fix $p_1,\dots ,p_e\in X_{\reg}(\CC)\setminus X_{\reg}(\RR)$ such that $\sigma (p_i)\ne p_j$ for all $i,j$ (for instance take general $p_1,\dots ,p_e$). Fix $p_{2e+1},\dots ,p_{2e+f}\in X_{\reg}(\CC)$. For any $i=1,\dots ,e$ take any connected and curvilinear scheme
$Z_i \subset X(\CC)$ such that $\deg (Z_i) =d_i$ and $(Z_i)_{\red} =\{p_i\}$. For $1\le i \le e$, set $Z_{i+e} = \sigma (Z_i)$. For $2e+1 \le i \le 2e+f$, let $Z_i\subset X_{\reg}(\RR)$ denote a connected zero-dimensional scheme defined over $\RR$, with $\deg (Z_i) =d_i$ and $(Z_i)_{\red} =\{p_i\}$. Set $Z= Z_1\cup \cdots \cup Z_{2e+1}$. Such a $Z$ is a curvilinear scheme with scheme-label $(a,b;d_1,\ldots,d_s)$. Take any $q\in \langle Z \rangle$ such that $q\notin \langle Z'\rangle $ for any $Z'\subsetneq Z$; we may take as $q$ a general element of $\langle Z\rangle$, because $Z$ is linearly independent (since $k\le \rho (X(\CC))$) and $Z$ has only finitely many subschemes (because $Z$ is curvilinear). By Remark \ref{e0} and Proposition \ref{crvsbr}, $cr _{X(\CC)}(q) =k$ and $Z$ is the only scheme evincing the cactus rank of $q$. \end{proof}

\section{Typical labels}\label{Seu}

The definition of typical rank is a key notion for real ranks of (real) algebraic varieties. Recently this topic has witnessed a tremendous amount of results; see for instance \cite{abc, bb, b, bbo, BrSt, mm,mmsv, v}. We extend some of them here to the setting of {\it typical labels} which will be defined in a moment. 

First, let $X(\CC) \subset \PP^r(\CC)$ be a nondegenerate projective variety defined over $\RR$. The \emph{generic rank} $r_{\gen}(X(\CC))$ of $X(\CC)$ is the minimal integer $k$ such that $\sigma _k(X(\CC)) = \PP^r(\CC)$. The integer $r_{\gen}(X(\CC))$ is the unique integer such that there is a
non-empty Zariski open subset $U \subset \PP^r(\CC)$ with $r_{X(\CC)}(q)= r_{\gen}(X(\CC))$ for all $q\in U$. A different scenario arises if one considers 
open subsets for the Euclidean topology in $\PP^r(\RR)$ instead. An integer $k$ is a \emph{typical rank} of $X(\CC)$
if there exists a non-empty Euclidean open subset $U_k\subset \PP^r(\RR)$ such that $r_{X(\RR)}(q) =k$ for all $q\in U_k$, and there might be several of them. (For this definition, as far as we know, most of the references above either assume $X_{\reg}(\RR)\ne\emptyset$ or study examples where this condition is satisfied.) To define typical labels and their weights we first state the general definition and then shift gears to examples with $X_{\reg}(\RR)\ne \emptyset$.

\begin{definition}\label{typicallab}
A label $(a,b)$ is a {\it typical label} of $X(\CC)$ if there is a non-empty Euclidean open subset $U\subset \PP^r(\RR)$ such that
all $q\in U$ have $(a,b)$ as one of their labels with weight $2a+b = \ell_{X(\CC),\sigma}(q)$. The {\it typical weight-labels} of $X(\CC)$
are the weights of the typical labels.
\end{definition}

The main results in \cite{bal} essentially establish that in many interesting cases there are typical labels with at most {\it two} consecutive weights and that in a few, very particular, cases all typical labels have the same weight, see, e.g., our Theorem \ref{o1}.

\begin{proposition}\label{typicallabels}
Let $X(\CC)\subset \PP^r(\CC)$ be an integral and nondegenerate complex projective variety with $X_{\reg}(\RR)\ne \emptyset$. Let $g=r_{\gen}(X(\CC))$. Then all labels $(a,b)$ with $2a+b =g$ are typical.
\end{proposition}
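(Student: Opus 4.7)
The plan is, for a given label $(a,b)$ with $2a+b=g$, to build an explicit semialgebraic parametrization of $\sigma$-invariant decompositions with that label and prove its image contains a non-empty Euclidean open subset of $\PP^r(\RR)$. Set $n=\dim X(\CC)$ and consider
\[
\Omega_{a,b}\defeq U_a\times X_{\reg}(\RR)^b\times \bigl((\CC^a\times \RR^b)\setminus \{0\}\bigr)/\RR^{*},
\]
where $U_a\subset (X_{\reg}(\CC)\setminus X_{\reg}(\RR))^a$ is the open subset of tuples $(p_1,\dots,p_a)$ such that the $2a$ points $p_1,\sigma(p_1),\dots,p_a,\sigma(p_a)$ are pairwise distinct, together with the semialgebraic map
\[
\Phi\colon \Omega_{a,b}\to \PP^r(\RR),\qquad \bigl((p_i),(q_j),[\alpha,\beta]\bigr)\mapsto \Bigl[\sum_{i=1}^a\bigl(\alpha_i p_i+\bar{\alpha}_i\sigma(p_i)\bigr)+\sum_{j=1}^b \beta_j q_j\Bigr].
\]
Each element of $\Omega_{a,b}$ selects a $\sigma$-invariant set $S\subset X_{\reg}(\CC)$ of cardinality $g$ with label $(a,b)$, and $\Phi$ picks out a real point of $\langle S\rangle_\RR$. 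Counting parameters yields $\dim_{\RR}\Omega_{a,b}=2an+bn+(2a+b-1)=g(n+1)-1$.

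The next step is to identify $\Omega_{a,b}$ with the fixed locus $F_\tau$ of the antiholomorphic involution $\tau$ on $X_{\reg}(\CC)^g\times \PP^{g-1}(\CC)$ that applies $\sigma$ together with the permutation exchanging the $(2i-1)$-th and $2i$-th entries for $1\le i\le a$ and fixing the last $b$ entries, with the analogous action on the projective coordinate. Under this identification, $\Phi$ becomes the restriction to $F_\tau$ of the standard complex secant parametrization
\[
\tilde{\Phi}\colon X_{\reg}(\CC)^g\times \PP^{g-1}(\CC)\dashrightarrow \PP^r(\CC),\qquad \bigl((s_k),[\gamma_k]\bigr)\mapsto \Bigl[\sum_{k=1}^g \gamma_k s_k\Bigr],
\]
and $\tilde{\Phi}\circ \tau=\sigma\circ \tilde{\Phi}$. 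Since $g=r_{\gen}(X(\CC))$, $\tilde{\Phi}$ is dominant, so the non-empty Zariski open locus $V$ on which $d\tilde{\Phi}$ is surjective is, by equivariance, $\tau$-invariant. The hypothesis $X_{\reg}(\RR)\neq \emptyset$ together with $n\ge 1$ (which guarantees that non-real smooth points and non-conjugate tuples exist in abundance) yields $F_\tau\neq \emptyset$, and Zariski density of the real locus of a smooth irreducible real variety then gives $F_\tau\cap V\neq \emptyset$.

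At any point $x\in F_\tau\cap V$, surjectivity of $d\tilde{\Phi}_x$ together with $\tau$-equivariance forces surjectivity of the real differential $d\Phi_x\colon T_x F_\tau\to T_{\Phi(x)}\PP^r(\RR)$: indeed, for $w\in T_{\Phi(x)}\PP^r(\RR)$ one lifts to $v$ with $d\tilde{\Phi}_x(v)=w$ and averages $v$ with $d\tau(v)$. The real implicit function theorem then guarantees that $\Phi(\Omega_{a,b})$ contains a Euclidean open neighborhood $U'$ of $\Phi(x)$. Finally, on the Zariski open complement $U''\defeq \PP^r(\CC)\setminus \sigma_{g-1}(X(\CC))$ one has $r_{X(\CC)}=g$; intersecting with $\PP^r(\RR)$ produces a Euclidean dense open subset. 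For every $q\in U\defeq U'\cap U''$, Remark \ref{a1} gives $\ell _{X(\CC),\sigma}(q)\ge r_{X(\CC)}(q)=g$, while membership in $U'$ exhibits a $\sigma$-invariant decomposition of weight $g$ and label $(a,b)$, forcing $\ell_{X(\CC),\sigma}(q)=g$ and showing $(a,b)$ is typical. The principal obstacle is the density step $F_\tau\cap V\neq \emptyset$: one must set up the twisted real structure $\tau$ carefully, verify its equivariance with $\tilde{\Phi}$, invoke Zariski density of the real locus of the smooth irreducible complex ambient, and use the elementary but crucial observation that taking $\tau$-invariants preserves surjectivity for equivariant linear maps.
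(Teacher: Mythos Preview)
Your proof is correct and reaches the same conclusion as the paper's, but via a more explicit differential-geometric route. The paper argues as follows: the set of $\sigma$-invariant configurations $S\subset X(\CC)$ with label $(a,b)$ is Zariski dense in the space of all cardinality-$g$ subsets of $X(\CC)$ (this is precisely your observation that $F_\tau$ is the non-empty real locus of a smooth irreducible variety under a twisted real structure); hence the image $E\subset\PP^r(\RR)$ of the secant parametrization restricted to these configurations has complex Zariski closure equal to all of $\PP^r(\CC)$, and a semialgebraic subset of $\PP^r(\RR)$ with this property must have full real dimension and therefore contain a Euclidean open set. Your approach instead locates a single point $x\in F_\tau$ where the complex differential of the secant map is surjective, transfers surjectivity to the real differential via $\tau$-equivariance and averaging, and invokes the implicit function theorem directly.

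Both arguments rest on the same Zariski-density fact; the paper's version is shorter but leans on semialgebraic dimension theory to pass from density to interior, while yours is more self-contained and makes the passage from complex dominance to a real open image completely explicit. Your verification that $\ell_{X(\CC),\sigma}(q)=g$ on the resulting open set (via Remark~\ref{a1} and the excision of $\sigma_{g-1}(X(\CC))$) is also spelled out more carefully than in the paper, which folds this step into the definition of $E$.
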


\begin{proof}
Fix $(a,b)\in \NN^2$ such that $2a+b =g$. We need to show the existence of a non-empty Euclidean subset $U$ of  $\PP^r(\RR)$ such
that for each $q\in U$ there is $S\subset X(\CC)$ with label $(a,b)$ and $q\in \langle S\rangle _{\RR}$.
Fix a general (in the Zariski topology) subset $A\subset X(\CC)$, such that $A\cap X(\RR ) =\emptyset$ and $\sigma (A)\cap A=\emptyset$.
Fix $B\subset X(\RR)$, which is general in $X(\RR)$ in the Zariski topology of $X(\RR)$. Since $X_{\reg}(\RR)\ne \emptyset$, we may take $B$ inside the smooth locus. Set $S= A\cup \sigma (A) \cup B$. By construction, $S$ has label $(a,b)$. The set of all such $S$ is Zariski dense in the set of all subsets of $X(\CC)$ with cardinality $g$. Varying $A$ and $B$ as above defines a semialgebraic set $E$ of points $q$ with $r_{X(\CC)}(q) = g$ and admitting a label $(a,b)$. The closure
of $E$ is $\PP^r(\RR)$. Therefore the semialgebraic set $E$ contains a non-empty Euclidean open subset. 
\end{proof}

\begin{remark}\label{balresults}
We now record some of the statements proven in \cite{bal}:
\begin{enumerate}
\item[(i)] Let $X(\CC)\subset \PP^{n+1}(\CC)$ be an integral hypersurface defined over $\RR$. Each $q\in \PP^{n+1}(\RR)\setminus
X(\RR)$ has a label of weight $2$; see \cite[Proposition 1.2]{bal}.

\item[(ii)] Let $X(\CC) \subset \PP^r(\CC)$ be an integral and nondegenerate variety defined over $\RR$ such that $X_{\reg}(\RR)\ne \emptyset$. Assume generic identifiability holds for $\sigma_{g-1}(X(\CC))$, where $g$ is the generic complex rank, the minimal integer such that $\sigma _g(X(\CC)) =\PP^r(\CC)$. Then $g$ is a typical weight-label and no integer $\ge g+2$ is a typical weight-label; see \cite[Theorem 1.4]{bal}.

\item[(iii)] Fix integers $n\ge 1$ and $d\ge 3$. Assume $(n,d) \notin \{(2,6), (3,4), (5,3)\}$. Set $r= \binom{n+d}{n}-1$ and $g= \lceil (r+1)/(n+1)\rceil$. Let $X(\CC)\subset \PP^r(\CC)$ be the $d$-th Veronese embedding. Then $g$ is a typical weight-label and no integer $\ge g+2$ is a typical weight-label; see \cite[Theorem 1.5]{bal}.

\item[(iv)] Let $X(\CC )\subset \PP^r(\CC)$ be an integral and nondegenerate curve defined over $\RR$ and with $X(\RR)$ infinite.
Then $\lfloor (r+2)/2\rfloor$ is a typical rank and no integer $\ge 2+ \lfloor (r+2)/2\rfloor$ is a typical weight-label; see \cite[Proposition 1.6]{bal}.
\end{enumerate}
\end{remark}

\begin{theorem}\label{v1}
Fix an odd integer $r\ge 3$. Set $g= (r+1)/2$. Let $X(\CC)\subset \PP^r(\CC)$ be a linearly normal elliptic curve defined
over $\RR$ with $X(\RR )\ne \emptyset$.
\begin{enumerate}
\item[(i)] The integers $g$ and $g+1$ are typical weight-labels of $X(\CC)$,  and all
pairs $(a,b)\in \NN^2$ with $2a+b = g$ are typical labels of $X(\CC)$.

\item[(ii)] The weight $g+1$ is a typical weight-label and it is the maximum of such. Therefore $g$ and $g+1$ are the only typical weight-labels.
\end{enumerate}
\end{theorem}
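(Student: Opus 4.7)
My plan comprises three pieces.

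The easy parts follow directly from tools already in the paper. Since $X$ is smooth and $X(\RR)\neq\emptyset$, one has $X_{\reg}(\RR)\neq\emptyset$. A linearly normal elliptic curve of degree $d=2g$ in $\PP^{2g-1}$ has non-defective secant varieties, so $r_{\gen}(X(\CC))=g$. Proposition \ref{typicallabels} then gives that every $(a,b)\in\NN^2$ with $2a+b=g$ is a typical label, settling the last clause of (i) and establishing $g$ as a typical weight-label. Since $X(\RR)$, a non-empty smooth closed real $1$-manifold, is infinite, and $\lfloor(r+2)/2\rfloor=g$ for $r$ odd, Remark \ref{balresults}(iv) rules out every weight $\ge g+2$ from being typical. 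The only remaining step --- shared by (i) and (ii) --- is to show that the weight $g+1$ is itself typical.

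For this, it suffices to exhibit a non-empty Euclidean open subset $U\subseteq \PP^r(\RR)$ on which $\ell_{X(\CC),\sigma}(\cdot)>g$, since the upper bound above then forces $\ell_{X(\CC),\sigma}\equiv g+1$ on $U$. I describe the rank-$g$ decompositions via the Abel-Jacobi parametrization: for each $L\in\op{Pic}^g(X)\cong X$, the complete linear system $|L|\cong\PP^{g-1}$ consists of effective divisors of degree $g$ whose spans are $(g-1)$-secant planes to $X$, and the total family is a $g$-dimensional $\PP^{g-1}$-bundle over $X$. The resulting incidence map $\pi\colon \mathcal{I}\to\PP^r(\CC)$ is generically finite of some positive degree $\nu$, and complex conjugation $\sigma$ acts on the generic fiber. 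A $\sigma$-stable rank-$g$ decomposition of a real $q$ is precisely a $\sigma$-fixed element of $\pi^{-1}(q)$, so exhibiting $U$ reduces to showing that the $\sigma$-action on $\pi^{-1}(q)$ is fixed-point-free on a non-empty Euclidean open region of $\PP^r(\RR)$.

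The crux and main obstacle is constructing such a region. For the base case $g=2$ (real elliptic quartic in $\PP^3$), through a general $q\in\PP^3(\CC)$ there are $\nu=\binom{d-1}{2}-g=2$ complex secants; for real $q$ these are either both real or a $\sigma$-conjugate pair, and the latter configuration fills a non-empty Euclidean open subset of $\PP^3(\RR)$ separated from the real-secant regime by the real discriminant of the secant pencil. On that subset no $\sigma$-stable rank-$2$ decomposition exists, hence $\ell_{X(\CC),\sigma}(q)=3=g+1$. For $g\ge 3$, I would derive the analogous statement either by reduction to the $g=2$ case via a generic real projection from an $(r-4)$-plane spanned by smooth real points of $X$ (using Proposition \ref{eo2}(iii) to control admissibility of the lift), or intrinsically by analyzing the real monodromy of the Abel-Jacobi span map to exclude real fixed points of $\sigma$ in $\pi^{-1}(q)$ over an open set of real $q$. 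Rigorously executing either the projection-bootstrap or the monodromy argument for all $g\ge 3$ is the main technical difficulty.
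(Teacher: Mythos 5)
Your overall frame coincides with the paper's: Proposition \ref{typicallabels} handles all labels of weight $g$, Remark \ref{balresults}(iv) excludes every weight $\ge g+2$, and everything reduces to showing that weight $g+1$ is typical, i.e., that $\ell_{X(\CC),\sigma}>g$ on a non-empty Euclidean open subset of $\PP^r(\RR)$. But precisely that step is where you stop: for $g\ge 3$ you propose two unexecuted strategies (projection bootstrap or real monodromy) and concede the difficulty, and even your base case $g=2$ only asserts, without proof, that the ``conjugate-pair'' regime is non-empty. So the proposal has a genuine gap at the heart of the theorem. The idea you are missing is that no induction or monodromy analysis is needed: by Chiantini--Ciliberto \cite[Proposition 5.8]{cc1}, for \emph{every} $g$ a general $q\in\PP^r(\CC)$ has exactly two rank-$g$ decompositions $S_1,S_2$, with $\langle S_1\rangle_{\CC}\cap\langle S_2\rangle_{\CC}=\{q\}$; this is the degree $\nu=2$ of your incidence map, known in all dimensions at once (your Berzolari count confirms it only for $g=2$, and there your formula uses $g$ for the genus $1$ rather than the generic rank $2$). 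For real such $q$, conjugation either fixes each $S_i$ as a set --- note this is the correct dichotomy, not ``both secants real'', since a $\sigma$-stable $S_i$ may contain non-real points --- or swaps $S_1$ and $S_2$, and in the swapped case $q$ admits no label of weight $g$.

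Even granting $\nu=2$, the non-emptiness of the swapped region still requires a witness, and the paper's construction is short: pick $S\subset X(\CC)$ with $\sharp(S)=g$, $\sigma(S)\ne S$ and $S\cap\sigma(S)=\emptyset$, chosen so that $\langle S\rangle_{\CC}\cap\langle\sigma(S)\rangle_{\CC}$ is a single point $q'$ (on an elliptic normal curve of degree $2g$ the spans of two disjoint degree-$g$ divisors meet exactly when their sum lies in the hyperplane class, so such $S$ form a positive-dimensional real family, of real dimension $2g-1$). The uniqueness of $q'$ forces $\sigma(q')=q'$, and for general such $S$ the only two rank-$g$ decompositions of $q'$ are $S$ and $\sigma(S)$, which $\sigma$ swaps; hence $q'$ has no weight-$g$ label and $\ell_{X(\CC),\sigma}(q')>g$. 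Varying $S$ sweeps out a semialgebraic set of such real points of full dimension $r$, hence containing a Euclidean open set $U$. Since the sets $W^0_{k,\sigma}(X(\CC))$ are finitely many semialgebraic sets covering $U$ (Proposition \ref{upperbound2g} bounds $k\le 2g$), some weight $\ge g+1$ is typical, and Remark \ref{balresults}(iv) forces it to be exactly $g+1$. In short, your Abel--Jacobi description of the secant fibration is a reasonable scaffold, but without the identifiability input $\nu=2$ and an explicit construction of a real point whose two decompositions are conjugate-swapped, the proof does not close.
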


\begin{proof}
(i). By \cite[Proposition 5.8]{cc1}, there is a non-empty Zariski open subset $V\subset \PP^r(\CC)$ such that $r_{X(\CC)}(q)=g$ and $\sharp (\Ss (X(\CC),q)) =2$,
for all $q\in V$, with $\langle S_1\rangle_{\CC} \cap \langle S_2\rangle _{\CC} =\{q\}$, where $\{S_1,S_2\} =\Ss (X(\CC),q)$. Since $\PP^r(\RR)$ is Zariski dense, there exists  $q\in V\cap \PP^r(\RR)$. Write $\Ss (X(\CC),q) =\{S_1,S_2\}$. Therefore two cases may occur:
\begin{itemize}
\item[({\bf a})] $\sigma (S_1) =S_1$;
\item[({\bf b})] $\sigma (S_1)\ne S_1$.
\end{itemize}

\quad ({\bf a}) Assume $\sigma (S_1) =S_1$. Since $\sigma (q) =q$, we have $\sigma (S_2)=S_2$. Fix a label $(a,b)$ with weight
$2a+b=g$. Fix $a$ general points $u_1,\dots ,u_a$ of $X(\CC)$ (i.e., a general
subset of $X(\CC)$ for the Zariski topology of cardinality $a$) and $b$ general points $v_1,\dots ,v_b$ of $X(\RR)$ (i.e., a general $b$-tuple $(v_1,\dots ,v_b)\in X(\RR)^b$, where $X(\RR)$ is either a circle or a disjoint union of two circles). We may choose the $a$ points so that $\sigma
(u_i)\ne u_j$ for any $i, j$. Set
$S=\{v_1,\dots ,v_b,u_1,\dots ,u_a,\sigma _1(u_1),\dots ,\sigma (u_a)\}$. If $q\in \langle S\rangle _{\RR}$ and $q\notin \langle
S'\rangle _{\CC}$ for any $S'\subsetneq S$, then $S\in \Ss(X,q)$. Thus each label $(a,b)$
with weight $2a+b=g$ is typical and the non-empty subset of $V$ coming from case ({\bf a}) has at least one of these $(a,b)$ as its label. 

\quad ({\bf b}) Assume $\sigma (S_1)\ne S_1$. Since $\sigma (q)=q$ and $\Ss (X,q) =\{S_1,S_2\}$, we derive
$\sigma (S_1) =S_2$ and $\sigma (S_2)=S_1$. Therefore $q$ has no label of weight $g$. 

Since a general $S\subset X(\CC)$ with 
$\sharp (S) =g$ is not fixed by the conjugation $\sigma$, one has $S\cap \sigma (S)=\emptyset$. The intersection $\langle S\rangle _{\CC}\cap \langle \sigma(S)\rangle _{\CC}$ is a single point $q'$. The uniqueness of $q'$ gives $\sigma (q') = q'$. Thus we see that case ({\bf b}) occurs in a non-empty Euclidean open
subset of $\PP^r(\RR)$. Hence there are typical weights $>g$. 

(ii). By Remark \ref{balresults}(iv), there is no typical weight $\ge g+2$. 
\end{proof}

\begin{theorem}\label{w11}
Let $X(\CC)\subset \PP^3(\CC)$ be an integral and nondegenerate curve defined over $\RR$ with only planar singularities. Let
$d = \deg (X(\CC))$ and $p_a=p_a(X(\CC))$ be its arithmetic genus. Assume $(d-1)(d-2)/2 - p_a \equiv 1 \pmod{2}$.
Then: 
\begin{enumerate}
\item[(i)] $(1,0)$ is a typical label;
\item[(ii)] $(0,2)$ is a typical label if and only if $X_{\reg}(\RR)\ne \emptyset$, i.e., if and only if $X(\RR)$ is infinite;
\item[(iii)] No label of weight $>2$ is typical.
\end{enumerate}
\end{theorem}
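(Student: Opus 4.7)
The plan is to exploit a classical projection--chord count for space curves together with a parity argument over $\RR$. Since $X(\CC)$ is an integral nondegenerate curve in $\PP^3$, its bisecant variety $\sigma_2(X(\CC))$ has dimension $3$, so $g:=r_{\gen}(X(\CC))=2$; in particular $\ell_{X(\CC),\sigma}(q)\ge r_{X(\CC)}(q)=2$ on a Zariski dense open subset of $\PP^3(\RR)$. The key input from classical curve theory is the double-point/genus formula: for a generic $q\in\PP^3(\CC)$, the linear projection $\pi_q$ restricts to a birational morphism of $X(\CC)$ onto a plane curve of degree $d$ whose new singularities (those not coming from $X_{\textnormal{sing}}(\CC)$) are exactly $\delta:=(d-1)(d-2)/2-p_a$ ordinary nodes. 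Equivalently, through a generic point of $\PP^3(\CC)$ pass exactly $\delta$ chords of $X(\CC)$, each meeting $X$ in two distinct smooth points and avoiding $X_{\textnormal{sing}}(\CC)$. Since $\PP^3(\RR)$ is Zariski dense in $\PP^3(\CC)$, this property holds on a Euclidean dense open subset $U\subset\PP^3(\RR)$.

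For (iii), I will apply a parity argument to the set $C_q$ of $\delta$ chords through a point $q\in U$. The involution $\sigma$ acts on $C_q$ since both $q$ and $X(\CC)$ are $\sigma$-invariant, and chords not fixed by $\sigma$ come in conjugate pairs. Hence the number of real chords through $q$ is congruent to $\delta\pmod 2$; the hypothesis $\delta\equiv 1\pmod 2$ then forces at least one real chord $L$ through $q$. The pair $L\cap X(\CC)=\{p_1,p_2\}$ is $\sigma$-stable, so $q\in\langle\{p_1,p_2\}\rangle_{\RR}$ exhibits $\ell_{X(\CC),\sigma}(q)\le 2$. Combined with the lower bound, $\ell_{X(\CC),\sigma}(q)=2$ on a Euclidean dense open subset of $\PP^3(\RR)$, which rules out any typical label of weight $>2$ and proves (iii).

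For (i) I will verify that the semialgebraic set
\[
E_{(1,0)} \;=\; \bigcup_{w\in X(\CC)\setminus X(\RR)} \langle w,\sigma(w)\rangle_{\RR}
\]
has real dimension $3$ in the $3$-manifold $\PP^3(\RR)$, hence non-empty Euclidean interior. The incidence $\{(w,q):q\in\langle w,\sigma(w)\rangle_{\RR}\}$ has real dimension $2+1=3$ (the parameter $w$ ranges over a real $2$-manifold and each real line $\langle w,\sigma(w)\rangle_{\RR}$ contributes a $1$-dimensional fiber), and the projection $(w,q)\mapsto q$ is generically finite by the chord count (at most $\delta$ real chords through a fixed $q\in U$, hence at most $2\delta$ preimages $w$). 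On any point of $E_{(1,0)}^{\circ}\cap U$, the pair $\{w,\sigma(w)\}$ realizes the label $(1,0)$ of cardinality $2=\ell_{X(\CC),\sigma}(q)$, so $(1,0)$ is typical.

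For (ii), the same construction is applied to
\[
E_{(0,2)} \;=\; \bigcup_{\{p_1,p_2\}\subset X(\RR),\, p_1\ne p_2}\langle p_1,p_2\rangle_{\RR}.
\]
If $X_{\reg}(\RR)\ne\emptyset$, then $X(\RR)$ contains a $1$-dimensional real arc (hence is infinite), the pair space is $2$-dimensional, the incidence is $3$-dimensional, and the same generic finiteness argument yields a non-empty Euclidean interior where $(0,2)$ is a label of cardinality $2$. Conversely, if $X_{\reg}(\RR)=\emptyset$, then $X(\RR)\subseteq X_{\textnormal{sing}}(\CC)$ is finite because $X$ is an irreducible curve, so $E_{(0,2)}$ is a finite union of real lines of dimension $\le 1$ and has empty interior, precluding $(0,2)$ from being typical. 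The main obstacle will be the rigorous chord-count statement on the real locus that underlies all three parts: one needs Bertini-type arguments guaranteeing that the $\delta$ new nodes of $\pi_q$ are distinct, ordinary, and disjoint from $\pi_q(X_{\textnormal{sing}}(\CC))$ on a Zariski open subset of $\PP^3(\CC)$ defined over $\RR$ (so that the subset descends to a Euclidean dense open $U\subset\PP^3(\RR)$), and that each parameterizing incidence is a semialgebraic set with the predicted generic fiber.
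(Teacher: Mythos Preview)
Your proof is correct and, for the substantive part (iii), follows exactly the paper's approach: project from a generic real point, use the genus/double-point formula to get $\delta=(d-1)(d-2)/2-p_a$ new ordinary nodes in the image plane curve, and apply the $\sigma$-parity argument to the odd set of nodes to force a real node, hence a $\sigma$-stable bisecant through $q$. The ``Bertini-type arguments'' you flag are precisely what the paper spells out: one must remove from $\PP^3(\RR)$ the real loci of the tangential variety $\tau(X)$, the trisecant locus $\Delta$, the Zariski tangent planes $T_{z_i}X$ at the (planar) singular points, the cones $\Delta_i$ joining each $z_i$ to $X$, and the finite Segre set of centers with non-birational projection; each has real dimension $\le 2$, so their union leaves a Euclidean-dense open $U$.

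For (i) and (ii) the paper simply invokes its earlier Proposition that every label of weight $g$ is typical (which, as written, assumes $X_{\reg}(\RR)\ne\emptyset$; for the label $(1,0)$ that hypothesis is not actually used in the proof). Your self-contained incidence/dimension count is an equivalent route and has the merit of handling $(1,0)$ uniformly and of making the ``only if'' direction of (ii) explicit (the paper leaves it implicit). One small point to tighten in your argument for (i): to conclude that the projection $(w,q)\mapsto q$ is generically finite you need $E_{(1,0)}\cap U\ne\emptyset$, i.e.\ that a generic conjugate-pair secant $\langle w,\sigma(w)\rangle_{\RR}$ is not contained in the bad locus listed above; this is straightforward but should be said.
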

\begin{proof}
(i) and (ii) are by Proposition \ref{typicallabels}. \\
(iii). Let $\tau (X(\CC))$ denote the tangential variety of $X(\CC)$, i.e., the closure of the union of all tangent lines to
$X_{\reg}(\CC)$. It is defined over $\RR$ and hence its intersection with $\PP^3(\RR)$ has real dimension $\le 2$. The curve $X(\CC)$ has
only finitely many singular points, say $z_1,\dots ,z_k$ with $k\ge 0$. 
By assumption, each Zariski tangent space $T_{z_i}(X(\CC))$ is a complex plane; this is real if and only if $z_i\in
X(\RR)$. If $z_i\notin \PP^3(\RR)$, then $k\ge 2$ and there is a unique $j\in
\{1,\dots ,k\}$ with $\sigma (z_i) =z_j$. We have $\sigma (T_{z_i}(X(\CC))) = T_{z_j}(X(\CC))$ and $T_{z_i}X(\CC)\cap
T_{z_j}X(\CC)$ is defined over $\RR$. Let $\Delta \subset \PP^3(\CC)$ denote the closure of the union of all lines $L\subset \PP^3(\CC)$
such that $\sharp (L\cap X(\CC)) \ge 3$. The set $\Delta$ is a closed algebraic subset of $\PP^3(\CC)$ define over $\RR$: indeed, since $X(\CC)$ is defined over $\RR$, for every line $L\subset \Delta$, one has $\sigma(L)\subset \Delta$. The Trisecant lemma (\cite[p. 109]{acgh}) shows that $\dim _{\CC} \Delta \le 2$ and hence $\Delta \cap \PP^3(\RR)$ has real dimension $\le 2$. Moreover, since $X(\CC)$ is a curve, there are only finitely many points $q\in \PP^3\setminus X(\CC)$ such that $\pi _{q|X(\CC)}$ is {\it not} birational onto its image; the set $\mathfrak S$ of such points is called the {\it Segre set} of $X(\CC)$ in \cite{cc}. 

For each $i\in \{1,\dots ,k\}$, let $\Delta _i$ denote the join of $z_i$ and $X(\CC)$, i.e., the closure in $\PP^3(\CC)$ of the union of all lines spanned by $z_i$ and a point of $X(\CC)\setminus \{z_i\}$. If $\sigma (z_i) =z_i$ we have $\sigma (\Delta _i) =\Delta _i$, whereas if $\sigma (z_i) =z_j$ with $j\ne i$ we have $\sigma (\Delta _i)=\Delta _j$. Thus $\Delta _1\cup \cdots \cup \Delta _k$
is defined over $\RR$ and either it is empty (case $k=0$) or it is a surface (case $k>0$). Thus $\Delta _1\cup \cdots \cup \Delta _k  \cap \PP^3(\RR)$ has real dimension $\le 2$.

Thus there is an Euclidean open subset $U\subset \PP^3(\RR)$ such that $q\notin (\tau (X(\CC)) \cup \Delta
\cup T_{z_1}X(\CC)\cup \cdots \cup T_{z_k}X(\CC)\cup \Delta _1\cup \cdots \Delta_k \cup \mathfrak S)$ for all $q\in U$. Fix $q\in U$ and call $\pi_q:
\PP^3(\CC)\setminus \{q\}\to \PP^2(\CC)$ the projection away from $q$. Since $q\in \PP^3(\RR)$, $\pi _q$ is defined over $\RR$. Since $q\notin \tau
(X(\CC))$, we have $q\notin X(\CC)$ and so $f= \pi _{q|X(\CC)}: X(\CC) \to \PP^2(\CC)$ is a morphism defined over $\RR$. Let $Y(\CC) = f(X(\CC))$. 

Since $q\notin \mathfrak S$, $\deg (f)=1$ and so $Y(\CC)$ has degree $d$; moreover, since $q\notin \Delta$, every fiber of $f$ contains at most two points of $X(\CC)$. Since $q\notin T_{z_1}X(\CC)\cup \cdots \cup T_{z_k}X(\CC)\cup \Delta _1\cup \cdots \cup \Delta _k$, $f$ maps isomorphically
each singular point $z_i$ of $X$ to some branch of $Y(\CC)$ at $f(z_i)$ and there is no $o\in X(\CC) \setminus \{z_i\}$ with $f(o) =f(z_i)$. Since $q\notin \tau (X(\CC))$, the differential of $f$ is injective at each smooth point of $X(\CC)$. The degree $d$ plane curve $Y(\CC)$ has arithmetic genus $(d-1)(d-2)/2$.
Thus $Y(\CC)$ has (besides the $k$ singular points $f(z_i)$, $1\le i \le k$) exactly $(d-1)(d-2)/2 - p_a$ ordinary nodes. This set $S$ of extra nodes satisfies $\sigma (S) =S$. Hence if $\sharp (S)$ is odd, we have $S\cap \PP^2(\RR) \ne \emptyset$. Take any $u\in S\cap \PP^2(\RR)$. The latter corresponds to $u_1, u_2\in X(\CC)$ such that $u_1\ne u_2$. Since $\sigma (u)=u$, either $\sigma (u_1)=u_1$ and $\sigma (u_2)=u_2$ (and hence $(0,2)$ is a label of $q$ with minimal weight) or $\sigma (u_1) =u_2$ and $\sigma (u_2) =u_1$. Since $\PP^3(\RR)\setminus U$ has real dimension at most $2$, no other label is typical.
\end{proof}

\begin{definition}
Let $X(\CC)$ be as above defined over $\RR$ and let $a\ge 0$ be a nonnegative integer. 
For any $b\in \NN$, let $A_{a;b}$ be the set of all $q\in \PP^r(\RR)$ such that $b$ is the minimal integer such that $(a,b)$ is one of the labels of $q$. This means
that there exists $S$ with label $(a,b)$ such that $q\in \langle S\rangle_{\RR}$, but there is {\it no} $S'$ with label $(a,b-1)$ such that $q\in \langle S\rangle_{\RR}$. (The latter condition is considered always satisfied if $b=0$.) The set $S$ with label $(a,b)$ might be a {\it non-minimal} admissible decomposition for $q$. 
\end{definition}

Recall the notation: $A^{\circ}_{a;b}$ denotes the interior of $A_{a;b}$. 
\begin{definition}
Fix $a\in \NN$. An integer $b\ge 0$ is called {\it $a$-typical} with respect to $X(\CC)$ if $A_{a;b}\subset \PP^r(\RR)$ contains a non-empty
Euclidean open subset.
\end{definition}

Now, we follow the proof of \cite[Theorem 2.2]{bbo}. Let $b_a$ denote the minimal $b\in
\NN$ such that $(a,b)$ is $a$-typical. For the next two propositions, we assume $X(\RR)_{\reg}\neq \emptyset$.

\begin{proposition}\label{eu5}
Assume $A_{a,b}^{\circ} \ne \emptyset$ and $(a,b+1)$ not $a$-typical. Then $\overline{\cup _{b_a\le c\le b} A_{a;c}^{\circ}} =
\PP^r(\RR)$.
\end{proposition}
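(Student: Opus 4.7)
The plan is to adapt the proof of \cite[Theorem 2.2]{bbo} to the labelled setting. Assume for contradiction that there is a nonempty Euclidean open $V\subset\PP^r(\RR)$ disjoint from $\overline{\bigcup_{b_a\le c\le b}A_{a;c}^{\circ}}$. The goal is to force $(a,b+1)$ to be $a$-typical, contradicting the hypothesis.

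Since $V$ is Euclidean open, for each $c\in[b_a,b]$ the semialgebraic set $V\cap A_{a;c}$ is disjoint from the interior of $A_{a;c}$, and hence has Hausdorff dimension $<r$ as a semialgebraic subset of $\PP^r(\RR)$. Consequently $V':=V\setminus \bigcup_{c\le b}A_{a;c}$ has full dimension $r$. Writing $V'$ as the semialgebraic disjoint union $(V'\cap C_a)\sqcup \bigsqcup_{c>b}(V'\cap A_{a;c})$, where $C_a$ denotes the set of real points admitting no label of shape $(a,\cdot)$, one of these semialgebraic pieces must carry full dimension $r$.

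If some $A_{a;c_0}$ with $c_0>b$ has $V\cap A_{a;c_0}$ of dimension $r$, then $A_{a;c_0}^{\circ}\ne\emptyset$. Pick $p\in A_{a;b}^{\circ}$, $q\in A_{a;c_0}^{\circ}$, and a semialgebraic path $\gamma:[0,1]\to\PP^r(\RR)$ joining them. The integer-valued semialgebraic function $\psi(t):=\min\{c:\gamma(t)\in A_{a;c}\}$ is piecewise constant on a finite partition of $[0,1]$ by semialgebraic triviality, and a local analysis of how minimal $(a,\cdot)$-decompositions can degenerate across transitions (a conjugate pair collapsing to a real double point, a real point resolving into a conjugate pair, etc.) should show that jumps of $\psi$ have size at most $1$. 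Hence the value $b+1$ is attained on a subinterval of positive length, and by perturbing $\gamma$ within a tubular neighborhood one obtains a Euclidean open set inside $A_{a;b+1}$, making $(a,b+1)$ $a$-typical, a contradiction. In the alternative case $\dim(V\cap C_a)=r$, I would exploit the real-join parametrization $\Phi:X_{\reg}(\CC)^a\times X_{\reg}(\RR)^b\times \PP^{2a+b-1}(\RR)\dashrightarrow \PP^r(\RR)$, $((u_i),(v_k),[\alpha_i+\sqrt{-1}\beta_i:\gamma_k])\mapsto \sum_i((\alpha_i+\sqrt{-1}\beta_i)u_i+(\alpha_i-\sqrt{-1}\beta_i)\bar u_i)+\sum_k\gamma_kv_k$; the image of $\Phi$ contains $A_{a;b}^{\circ}$, and a path-connectedness argument deforming a fixed $(a,b)$-decomposition (using $X_{\reg}(\RR)\ne\emptyset$ to slide real points and $X_{\reg}(\CC)\setminus X(\RR)$ to slide conjugate pairs) extends the image enough to force every point of $\PP^r(\RR)$ to admit a label of shape $(a,\cdot)$, contradicting $C_a^{\circ}\ne\emptyset$.

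The main obstacle is the jump-by-at-most-one claim for $\psi$: controlling how minimal $\sigma$-invariant decompositions of $\gamma(t)$ degenerate as $t$ varies requires a label-refined version of semicontinuity on the Hilbert scheme of $X(\CC)$ together with the combinatorial rigidity imposed by fixing the first coordinate $a$. The density argument in the $C_a$ case is technically lighter but still subtle, since a priori some real points may admit no decomposition with exactly $a$ conjugate pairs; here the assumptions $A_{a;b}^{\circ}\ne\emptyset$ and $X_{\reg}(\RR)\ne\emptyset$ are both essential.
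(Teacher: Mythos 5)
There is a genuine gap at the heart of your argument: the ``jump-by-at-most-one'' claim for $\psi(t)=\min\{c:\gamma(t)\in A_{a;c}\}$ is never proved, and it is essentially equivalent in difficulty to the proposition itself. Minimal $\sigma$-invariant decompositions have no obvious semicontinuity: as $\gamma(t)$ crosses a wall of the semialgebraic stratification, the minimal second coordinate of a label with first coordinate $a$ can in principle drop or rise by more than one (several summands can degenerate simultaneously), and no ``label-refined semicontinuity on the Hilbert scheme'' is available off the shelf. Even granting unit jumps between adjacent open subintervals, your final step --- perturbing $\gamma$ in a tubular neighborhood to sweep out an open subset of $A_{a;b+1}$ --- needs an openness/uniformity property of the wall structure that you do not establish; the locus where $\psi=b+1$ along the perturbed paths could a priori stay inside a lower-dimensional set. (In \cite{bbo} the analogous consecutiveness of typical ranks is \emph{deduced from} a join argument, not from a wall-crossing analysis, precisely because the wall-crossing route is hard to control.) Incidentally, your second case is vacuous: since the paper assumes $X_{\reg}(\RR)\ne\emptyset$ here, $X(\RR)$ is Zariski dense and spans $\PP^r(\RR)$, so every $q\in\PP^r(\RR)$ has a label $(0,b')$ and hence, adding $a$ generic conjugate pairs, a label of shape $(a,\cdot)$; thus $C_a=\emptyset$ and the elaborate parametrization argument is unnecessary.

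The paper's proof replaces your wall-crossing analysis by a direct join-absorption mechanism, following \cite[Theorem 2.2]{bbo}. Set $A=\cup_{b_a\le c\le b}\overline{A_{a;c}^{\circ}}$. The elementary observation is that joining a point $p_i\in A_{a;c}^{\circ}$ with one point of $X(\RR)$ produces points having $(a,c+1)$ as one of their labels --- the second coordinate increases by \emph{exactly} one, by construction, with no degeneration analysis needed. The hypothesis that $(a,b+1)$ is not $a$-typical, together with \cite[Lemma 2.1]{bbo} (\ie $(\overline{A})^{\circ}\subseteq\overline{A^{\circ}}$), absorbs the top layer $c=b+1$ back into $A$, so $J^0_{\RR}(\{p_i\},X(\RR))\subseteq A$; since $A$ is closed, $J_{\RR}(\{p\},X(\RR))\subseteq A$ for every $p\in A$, and iterating joins with $X(\RR)$, which spans $\PP^r(\RR)$, forces $A=\PP^r(\RR)$ directly, with no contradiction argument. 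If you want to salvage your approach, you would have to prove your jump claim, and the only plausible route to it is this same join construction --- at which point the path is superfluous.
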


\begin{proof}
Set $A = \cup _{b_a\le c\le b} \overline{A_{a;c}^{\circ}}$. By Remark \ref{eur1},  $A$ is the closure of  the interior of $\cup _{b_a \le c\le b} A_{a;c}$ in $\PP^r(\RR)$. Since $b+1$ is not $a$-typical and by \cite[Lemma 2.1]{bbo}, we have $A^{\circ}_{a,b+1}\subseteq A$ and hence $\cup _{c\le b+1} A^{\circ}_{a;c} \subseteq A$. Fix $p\in A$. By definition of closure, there is a sequence $\{p_i\}_{i\in \NN}$ converging to $p$ with $p_i \in (\cup
_{b_a\le c\le b} A_{a;c})^{\circ} = \cup _{b_a\le c\le b} A^{\circ}_{a;c}$. Possibly passing to a subsequence, we may assume the existence of $c$
such that $b_a\le c\le b$ and $p_i\in A_{a;c}^{\circ}$ for all $i$. Hence each element of $J^0_{\RR}(\{p_i\},X(\RR))$ has $(a,c+1)$ as one of its
labels and $J^0_{\RR}(\{p_i\},X(\RR)) \subseteq A$. Since $A$ is closed, we obtain $J_{\RR}(\{p\},X(\RR))\subseteq A$.
By induction, we  derive that $A$ contains the join between $p$ and an arbitrary number of joins of $X(\RR)$. Since $X(\RR)$ spans
$\PP^r(\RR)$, we have $A = \PP^r(\RR)$. 
\end{proof}

Proposition \ref{eu5} shows a similarity between typical ranks and typical labels; see \cite[Corollary 2.3]{bbo} for typical ranks.

\begin{corollary}\label{eu6}
Assume that $(a,b)$ and $(a,c)$, for some $c\ge b+2$, are $a$-typical labels. Then all $(a,y)$, $b<y<c$, are $a$-typical.
\end{corollary}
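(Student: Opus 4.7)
The plan is to mimic the Blekherman--Ottaviani--type argument for typical ranks: we assume for contradiction that some weight $y$ strictly between $b$ and $c$ fails to be $a$-typical and derive a contradiction from Proposition~\ref{eu5} combined with the fact that the sets $A_{a;b'}$ are pairwise disjoint in $b'$.

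More precisely, suppose that $(a,y)$ is not $a$-typical for some $y$ with $b<y<c$. First I would let $b'$ be the largest integer in the interval $[b,y-1]$ such that $(a,b')$ is $a$-typical; this is well defined because $(a,b)$ itself is $a$-typical by hypothesis. By maximality of $b'$ within $[b,y-1]$ and by the contradiction hypothesis for $y$, the integer $b'+1\le y$ satisfies: $(a,b'+1)$ is not $a$-typical. Since $A^{\circ}_{a;b'}\ne\emptyset$, the assumptions of Proposition~\ref{eu5} are met at $b'$, and we conclude
\[
\overline{\bigcup_{b_a\le c'\le b'} A^{\circ}_{a;c'}}=\PP^r(\RR).
\]

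Next I would exploit the crucial disjointness: by the very definition of $A_{a;b''}$ (the second coordinate $b''$ is the \emph{minimum} $y$ such that $(a,y)$ is a label of the given point), the sets $A_{a;b''}$ are pairwise disjoint as $b''$ varies. In particular $A_{a;c}$ is disjoint from $\bigcup_{b_a\le c'\le b'} A_{a;c'}$, because $b'<c$. Since $A^{\circ}_{a;c}$ is open, standard point-set topology gives that it is also disjoint from the closure of this union:
\[
A^{\circ}_{a;c}\cap \overline{\bigcup_{b_a\le c'\le b'} A^{\circ}_{a;c'}}=\emptyset.
\]
Combining with the previous equality, $A^{\circ}_{a;c}=\emptyset$. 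But this contradicts the hypothesis that $(a,c)$ is $a$-typical, which precisely asks for $A^{\circ}_{a;c}\ne\emptyset$.

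Conceptually there is no single ``hard step''; the main point to get right is the pairwise disjointness of the strata $A_{a;b''}$, which comes directly from the minimality built into their definition, and the careful choice of $b'$ so that Proposition~\ref{eu5} can be applied. Once these are in place, the argument runs completely in parallel with the real-rank analogue \cite[Corollary 2.3]{bbo}.
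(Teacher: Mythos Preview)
Your argument is correct and is precisely the one the paper intends: the corollary is stated without proof immediately after Proposition~\ref{eu5}, with the remark that it parallels \cite[Corollary~2.3]{bbo}, and your write-up spells out exactly that deduction. The key ingredients you identify---choosing the maximal $b'\in[b,y-1]$ with $(a,b')$ $a$-typical so that Proposition~\ref{eu5} applies at $b'$, and then using the pairwise disjointness of the $A_{a;\cdot}$ (built into the minimality in their definition) together with the openness of $A^{\circ}_{a;c}$ to contradict $\overline{\cup_{b_a\le c'\le b'}A^{\circ}_{a;c'}}=\PP^r(\RR)$---are exactly right.
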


Fix $b\in \NN$. We say that a label $(a,b)$ is {\it typical-$b$} if there is a non-empty open subset $U$ of $\PP^r(\RR)$ such that each $q\in U$ has $(a,b)$ as one of its label and $(a-1,b)$ is not one of its labels. With these definitions, a similar proof to the one of Proposition \ref{eu5} yields the next result and its corollary:  

\begin{proposition}\label{eu7}
Assume $A_{a,b}^{\circ} \ne \emptyset$ and $(a+1,b)$ is typical-$b$. Then $\overline{\cup _{b_a\le x\le b} A_{a,b}^{\circ}} =
\PP^r(\RR)$.
\end{proposition}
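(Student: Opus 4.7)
The plan is to adapt the proof of Proposition \ref{eu5}, interchanging the roles of the two coordinates of a label: that argument grows $b$ via joins with $X(\RR)$, whereas here one grows $a$ via real joins with conjugate pairs $\{z,\sigma(z)\}\subset X(\CC)\setminus X(\RR)$. For $p\in \PP^r(\RR)$, introduce the \emph{conjugate-pair real join}
\[
J^{\sigma}_{\RR}(\{p\},X(\CC)) \;=\; \bigcup_{z\in X(\CC)\setminus X(\RR)}\bigl(\langle p,z,\sigma(z)\rangle_{\CC}\cap \PP^r(\RR)\bigr),
\]
so that, whenever $p$ admits an admissible decomposition $S$ of label $(x,b)$, every $q\in J^{\sigma}_{\RR}(\{p\},X(\CC))$ admits the label $(x+1,b)$ via the enlarged decomposition $S\cup\{z,\sigma(z)\}$.

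Next, letting $a_b$ be the minimal integer $a'$ for which $(a',b)$ is typical-$b$ (and interpreting the indexing in the conclusion in the sense suggested by the analogy with Proposition \ref{eu5}), I would set $A = \bigcup_{a_b\le x\le a}\overline{A_{x,b}^{\circ}}$. By Remark \ref{eur1} this equals the closure of the interior of $\bigcup_{a_b\le x\le a} A_{x,b}$ in $\PP^r(\RR)$. Since $(a+1,b)$ is typical-$b$, together with \cite[Lemma 2.1]{bbo}, one gets $A^{\circ}_{a+1,b}\subseteq A$, and hence $\bigcup_{x\le a+1} A_{x,b}^{\circ}\subseteq A$.

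Now fix $p\in A$. By definition of closure, there is a sequence $p_i\to p$ with each $p_i\in A_{x_i,b}^{\circ}$ for some $x_i\in[a_b,a]$; after passing to a subsequence, assume $x_i = c$ is constant. By the observation on labels above, every element of $J^{\sigma}_{\RR}(\{p_i\},X(\CC))$ admits $(c+1,b)$ as one of its labels, so $J^{\sigma}_{\RR}(\{p_i\},X(\CC))\subseteq \bigcup_{x\le a+1} A_{x,b}\subseteq A$. Since $A$ is closed, taking Euclidean limits yields $J^{\sigma}_{\RR}(\{p\},X(\CC))\subseteq A$. Iterating the conjugate-pair real join, $A$ contains every finite iterated conjugate-pair real join starting from $\{p\}$.

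The main obstacle is the concluding spanning step: to deduce $A=\PP^r(\RR)$, one needs that iterated conjugate-pair real joins from a single real point exhaust $\PP^r(\RR)$. This rests on the fact that the real linear span of the pairs $\{z,\sigma(z)\}$ with $z\in X(\CC)\setminus X(\RR)$ equals $\PP^r(\RR)$: since $X(\CC)\setminus X(\RR)$ is Zariski dense in $X(\CC)$ (as $X(\RR)$ has strictly smaller real dimension inside a nondegenerate $X(\CC)$) and $X(\CC)$ spans $\PP^r(\CC)$, one extracts a real basis of $\PP^r(\RR)$ from suitably chosen such pairs, and the inductive join argument of Proposition \ref{eu5} transfers verbatim to produce $A = \PP^r(\RR)$, i.e.\ $\overline{\bigcup_{a_b\le x\le a} A_{x,b}^{\circ}} = \PP^r(\RR)$.
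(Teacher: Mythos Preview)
Your proof is exactly the ``similar proof'' the paper alludes to: you swap the roles of the two label coordinates and replace the real join with $X(\RR)$ (which increments $b$) by the conjugate-pair real join $J^{\sigma}_{\RR}$ (which increments $a$), then rerun the argument of Proposition~\ref{eu5} verbatim, including the final spanning step. This is precisely the intended approach, and your discussion of why conjugate pairs span $\PP^r(\RR)$ fills the only nontrivial new detail.

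One caveat worth flagging: the step where you invoke \cite[Lemma~2.1]{bbo} to conclude $A^{\circ}_{a+1,b}\subseteq A$ actually requires that $(a+1,b)$ be \emph{not} typical-$b$, just as the parallel step in Proposition~\ref{eu5} uses ``$(a,b+1)$ not $a$-typical''. The hypothesis in Proposition~\ref{eu7} as printed appears to carry a typo (a missing ``not''); this is confirmed by Corollary~\ref{eu8}, which only follows from the contrapositive form. With that correction in the hypothesis, your argument goes through; as literally stated, neither your proof nor the paper's cited lemma applies.
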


\begin{corollary}\label{eu8}
Assume that $(a,b)$ and $(d,b)$, $d\ge a+2$ are typical-$b$ labels. Then all $(y,b)$, $a<y<d$, are typical-$b$. 
\end{corollary}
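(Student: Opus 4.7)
The plan is to argue by contradiction, in complete analogy with the proof of Corollary \ref{eu6} (with the roles of the two label coordinates interchanged and Proposition \ref{eu7} playing the role of Proposition \ref{eu5}). Suppose some $(y,b)$ with $a<y<d$ fails to be typical-$b$, and let $y_0$ be the smallest such integer. By the hypothesis that $(a,b)$ is typical-$b$ (when $y_0=a+1$) or by the minimality of $y_0$ (otherwise), the label $(y_0-1,b)$ is itself typical-$b$, so $A^{\circ}_{y_0-1,b}\ne\emptyset$.

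Next, since $(y_0,b)=((y_0-1)+1,b)$ is not typical-$b$, I would invoke Proposition \ref{eu7} with first coordinate $y_0-1$ in place of $a$, obtaining
\[
\overline{\bigcup_{x\le y_0-1}A^{\circ}_{x,b}}=\PP^r(\RR).
\]
On the other hand, $(d,b)$ typical-$b$ yields $A^{\circ}_{d,b}\ne\emptyset$, and since $d\ge a+2$ and $y_0\le d-1$ we have $d>y_0-1$. The loci $A_{x,b}$ are pairwise disjoint across $x$ (each point of $\PP^r(\RR)$ admits a unique minimal first coordinate for a label of the form $(\cdot,b)$), hence the non-empty open set $A^{\circ}_{d,b}$ lies outside every $A^{\circ}_{x,b}$ with $x\le y_0-1$, contradicting the density conclusion displayed above. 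This forces the nonexistence of $y_0$ and proves the corollary.

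The main obstacle I anticipate is essentially notational rather than mathematical. As stated, the hypothesis of Proposition \ref{eu7} reads ``$(a+1,b)$ is typical-$b$'' while the parallel Proposition \ref{eu5} reads ``$(a,b+1)$ not $a$-typical'', and the index of the union in the conclusion of \ref{eu7} is $x$ while the summand is $A_{a,b}^{\circ}$ (independent of $x$); I would read these as typos and take the intended statement to be the exact analog of \ref{eu5}, with the sets $A_{x,b}$ understood as the loci where $x$ is the minimal first coordinate witnessing a label of the form $(\cdot,b)$. Once this is in place, the pairwise disjointness of the $A_{x,b}$ in the first coordinate is immediate from the minimality condition defining them, and no additional work is required beyond the transcription of the proof of Corollary \ref{eu6}.
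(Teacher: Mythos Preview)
Your proposal is correct and is exactly the approach the paper intends: the paper gives no explicit proof, merely stating that Proposition~\ref{eu7} and Corollary~\ref{eu8} follow by the same argument as Proposition~\ref{eu5} and Corollary~\ref{eu6}, and your contradiction argument via Proposition~\ref{eu7} is the direct transcription of that. Your reading of the evident typos in the statement of Proposition~\ref{eu7} (the missing ``not'' and the mis-indexed union) is also the right one.
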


\section{Rational normal curves}

In this section, we show our main result for rational normal curves: 

\begin{theorem}\label{o1}
Fix an integer $r\ge 2$. Let $X(\CC) \subset \PP^r(\CC)$ denote the degree $r$ rational normal curve with $X(\RR)\simeq
\PP^1(\RR)$. For any $q\in \PP^r(\RR)$, we have
\[
\ell _{X(\CC),\sigma}(q) = r_{X(\CC)}(q) \mbox{ and }  cr_{X(\CC),\sigma}(q) = cr_{X(\CC)}(q).
\]
\end{theorem}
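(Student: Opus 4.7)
The lower bounds $\ell_{X(\CC),\sigma}(q)\ge r_{X(\CC)}(q)$ and $cr_{X(\CC),\sigma}(q)\ge cr_{X(\CC)}(q)$ are tautological, so my task is, for each real $q$, to produce a $\sigma$-stable decomposition, respectively zero-dimensional scheme, of cardinality, respectively degree, matching the complex rank, respectively cactus rank. My plan is to reduce everything to the apolarity picture for binary forms, where the rigid structure of $X(\CC)$ is most transparent.

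First, I would identify $\PP^r(\CC)$ with the projective space of binary forms of degree $r$ in $\CC[\alpha,\beta]$, so that $X(\CC)$ becomes the locus of $r$-th powers of linear forms and $\sigma$ becomes coefficient-wise complex conjugation. For $q=[f]$, the apolar ideal $f^\perp\subset\CC[\alpha,\beta]$ is a Gorenstein complete intersection $(F,G)$ with $a:=\deg F\le\deg G=:b$ and $a+b=r+2$. I will then invoke the classical apolarity dictionary: for a zero-dimensional scheme $Z\subset\PP^1(\CC)$, pushed forward by the Veronese, one has $q\in\langle Z\rangle_\CC$ iff $I_Z\subseteq f^\perp$. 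Consequently $cr_{X(\CC)}(q)=a$, while $r_{X(\CC)}(q)$ is the minimal $k$ for which $(f^\perp)_k$ contains a form of degree $k$ with $k$ distinct roots. Because $q$ is real, $f$ may be taken with real coefficients, so $f^\perp$ is $\sigma$-stable; in particular $F$ and $G$ may be chosen real, and $(f^\perp)_k(\RR)$ has real dimension equal to $\dim_\CC(f^\perp)_k$ and is Zariski dense in $(f^\perp)_k$.

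For the cactus equality I simply take $V(F)$: the pushforward is a degree $a$ scheme on $X(\CC)$, it is defined over $\RR$ since $F$ is, and it contains $q$ in its span by apolarity, giving $cr_{X(\CC),\sigma}(q)\le a=cr_{X(\CC)}(q)$. For the rank equality, with $k=r_{X(\CC)}(q)$, my plan is to locate a real $H\in(f^\perp)_k$ having $k$ distinct roots. The discriminant hypersurface $\Delta_k\subset\CC[\alpha,\beta]_k$ is defined over $\RR$ (as the vanishing locus of a resultant with integer coefficients), and by the very definition of $k$ we have $(f^\perp)_k\not\subseteq\Delta_k$; since $(f^\perp)_k(\RR)$ is Zariski dense in $(f^\perp)_k$, it too must miss $\Delta_k$. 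A generic such real $H$ has $k$ distinct roots which split into real ones and complex conjugate pairs; their images under the Veronese map form a $\sigma$-stable set $S\subset X(\CC)$ of cardinality $k$ with $q\in\langle S\rangle_\RR$, yielding $\ell_{X(\CC),\sigma}(q)\le k$.

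The only step beyond dictionary translation is the final genericity argument, which I expect to be the main, albeit mild, obstacle: one must ensure that the real form of a complex subspace cannot be accidentally swallowed by the proper $\sigma$-stable subvariety $\Delta_k$. I plan to handle this by combining the Zariski density of $(f^\perp)_k(\RR)$ in $(f^\perp)_k$ with the reality of $\Delta_k$, and then by verifying that the relevant complex subvariety is proper, which is precisely the content of the definition of $r_{X(\CC)}(q)$.
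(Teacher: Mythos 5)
Your proof is correct, but it takes a genuinely different route from the paper's. The paper argues geometrically via Sylvester's theorem, dividing into cases according to the border rank: when $b_{X(\CC)}(q)\le\lfloor (r+1)/2\rfloor$, the scheme $Z$ evincing the border/cactus rank is \emph{unique} because $\rho(X(\CC))=r+1$ (Remark \ref{e0}), hence $\sigma(Z)=Z$, which settles the cactus equality and the subcase $r_{X(\CC)}(q)=b_{X(\CC)}(q)$; when instead $r_{X(\CC)}(q)=r+2-b_{X(\CC)}(q)$, the paper shows the positive-dimensional family $U(q)$ of minimal decompositions is defined over $\RR$ with reduced general member, so its Zariski-dense real locus contains a reduced $\sigma$-stable $S$; finally, for $r$ even and $b_{X(\CC)}(q)=r/2+1$ it projects from a real point $o\in X(\RR)$ to reduce to the previous cases, with some care in choosing $o'$ so that the decomposition upstairs stays away from the tangent direction at $o'$. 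Your apolarity argument compresses all of this into one uniform step: since $f$ may be taken real, each graded piece $(f^\perp)_k$ is conjugation-stable, hence the complexification of its real points; the (real) minimal-degree generator $F$ gives the cactus equality immediately, and at $k=r_{X(\CC)}(q)$ the squarefree locus in $(f^\perp)_k$ is a nonempty Zariski-open subset, so Zariski density of $(f^\perp)_k(\RR)$ alone yields a real squarefree $H$ --- note that you do not even need $\Delta_k$ to be defined over $\RR$, so the ``main obstacle'' you flag at the end is already dispatched by the density statement (dense real points cannot all lie in a proper closed subset, real or not). Two points you should make explicit: a $\sigma$-fixed point of $\PP^r(\CC)$ admits a real homogeneous representative (so $f$ really can be taken real), and the hypothesis $X(\RR)\simeq \PP^1(\RR)$ is exactly what lets you identify $(X(\CC),\sigma)$ with the standard Veronese carrying coefficientwise conjugation --- for $r$ even there is a second real form with $X(\RR)=\emptyset$, treated separately in Theorem \ref{o3}, where your identification breaks down. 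In terms of trade-offs, your route is shorter, uniform, and case-free (it is in fact the method the authors themselves deploy computationally in Example \ref{twolabels}), while the paper's geometric argument has the advantage that its projection technique adapts to the real form without real points in Theorem \ref{o3}, where the apolarity normalization you rely on is unavailable.
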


\begin{proof}
It is clear from the definitions that $\ell _{X(\CC),\sigma}(q) \ge r_{X(\CC)}(q)$ and $cr_{X(\CC),\sigma}(q) \ge
cr_{X(\CC)}(q)$. Since the complex variety $X(\CC)$ is the rational normal curve, for any $q\in \PP^r(\CC)$, its cactus rank coincides with its border rank,
$b_{X(\CC)}(q) \le \lfloor (r+2)/2\rfloor$, and either $b_{X(\CC)}(q) = r_{X(\CC)}(q)$ or $r_{X(\CC)}(q) +b_{X(\CC)}(q) =r+2$ ({\it Sylvester's theorem}, see e.g. \cite{CoSe}).

Now assume $q\in \PP^r(\RR)$. For any $t\in \NN$, let $\mathrm{Div}^t(X(\CC))$ denote the set of all
degree $t$ effective divisors of $X(\CC)$. Since $X(\CC)\cong \PP^1(\CC)$, it is well-known that the algebraic variety
$\mathrm{Div}^t(X(\CC))$ is isomorphic to $\PP^t(\CC)$. Since the isomorphism between $X(\CC)$ and $\PP^1(\CC)$ is defined
over $\RR$, $\mathrm{Div}^t(X(\CC))$ is defined over $\RR$ and $\mathrm{Div}^t(X(\CC))(\RR) \cong \PP^t(\RR)$. 
We divide the rest of the proof into cases, according to the value of $b_{X(\CC)}(q)$, the border rank of $q$: \\
\indent \quad ({\bf a}) First assume $b_{X(\CC)}(q) \le \lfloor (r+1)/2\rfloor$. Since $\rho (X(\CC)) =r+1\ge 2b_{X(\CC)}(q)$, there is a
unique zero-dimensional scheme
$Z\subset X(\CC)$ such that $\deg (Z) = b_{X(\CC)}(q)$ and $q\in \langle Z\rangle$.
Since the embedding is defined over $\RR$, $\sigma (Z)$ is the unique scheme evincing the border or cactus rank of $\sigma (q)$.
Since $q\in \PP^r(\RR)$, we have $\sigma (q)=q$ and hence $\sigma (Z)=Z$. Thus $Z$ has a label. Hence 
$cr_{X(\CC),\sigma}(q)\le b_{X(\CC)}(q)$. Thus $cr_{X(\CC),\sigma}(q) = cr_{X(\CC)}(q) = b_{X(\CC)}(q)$.\\
\indent \quad ({\bf a1}) Assume $r_{X(\CC)}(q) = b_{X(\CC)}(q)$. Again, the uniqueness of $Z$ implies that $\sigma (Z)=Z$, i.e., $Z$ has a
label. Since $Z$ has a label, $\ell _{X(\CC), \sigma}(q) \le b_{X(\CC)}(q)$.\\
\indent \quad ({\bf a2}) Assume $r_{X(\CC)}(q)\ne b_{X(\CC)}(q)$. Thus $r_{X(\CC)}(q) = r+2-b_{X(\CC)}(q)$. Since $\rho (X(\CC)) = r+1$, for each $A\in \mathrm{Div}^{s}(X(\CC))$ we have $\dim \langle A\rangle = \min \{r,s-1\}$. Take any $S\subset X(\CC)$ such that $\sharp (S) =r+2-b_{X(\CC)}(q)$ and $q\in \langle S\rangle _{\CC}$. Since $q\in \langle Z\rangle \cap \langle S\rangle$, we get $h^1(X(\CC),\Ii _{Z\cup S}({r})) >0$. Since $\rho (X(\CC)) =r+1$, one has $\deg (Z\cup S) \ge r+2$. Thus $Z\cap S=\emptyset$. Let $U(q)$ denote the set of all divisors $S\in \mathrm{Div}^{r+2-b_{X(\CC)}(q)}(X(\CC))$ such that $\langle Z\rangle \cap \langle S\rangle =\{q\}$. We have just seen that the set $U'(q)$ of all reduced $A\in U(q)$ is the set of decompositions of $q$. By Sylvester's theorem, $U'(q)$ is a non-empty Zariski open subset of  a projective space of positive dimension and $U(q)$ is its closure in $\mathrm{Div}^{r+2-b_{X(\CC)}(q)}(X(\CC))$. 

Fix any $B\in U(q)$. Since $\sigma (q)=q$, we have $\sigma (B)\in U(q)$ for all $B\in U(q)$. Thus $U(q)$ is defined over $\RR$. Sylvester's theorem provides the existence of $S\in U(q)$ formed by $r+2-b_{X(\CC)}(q)$ distinct points, a rank decomposition of $q$. Thus a general $B\in U(q)$ is reduced. Since $U(q)(\RR)$ is Zariski dense in $U(q)$, we get the existence of a reduced $S\in U(q)(\RR)$. Since $S\in U(q)(\RR)$, we have $\sigma (S) =S$, i.e., $S$ has a label. Since $S\in U(q)$, we get $\ell _{X(\CC),\sigma}(q) \le r+2-b_{X(\CC)}(q)$.\\

\quad ({\bf b}) Assume $b_{X(\CC)}(q)>\lfloor (r+1)/2\rfloor$. Thus $r$ is even, $b_{X(\CC)}(q) = r/2 +1$ and $r_{X(\CC)}(q)=b_{X(\CC)}(q)$. To conclude the proof it is sufficient to find $A\in \Ss (X(\CC),q)$ with a label. Since $q\in \PP^r(\RR)$, we have $\sigma (\Ss(X(\CC),q)) = \Ss (X(\CC),q)$. Fix
$o\in X(\RR)$ and call
$\pi _o:
\PP^r(\CC)\setminus \{o\}\to \PP^{r-1}(\CC)$ the linear projection away from $o$. Since $o\in \PP^r(\RR)$, $\pi _o$ is defined over
$\RR$. Set $\Ss (X(\CC),o,q)= \{A\in \Ss (X(\CC),q)\mid o\in A\}$. The set $\Ss (X(\CC),o,q)$ is a Zariski closed
subset of the irreducible constructible set $\Ss (X(\CC),q)$. For a fixed $q$, it is clear we may take $o$ such that $\Ss
(X(\CC),o,q)\ne \emptyset$. For all
$A\in
\Ss (X(\CC),o,q)$ we have
$\sigma (A)\in
\Ss (X(\CC),o,q)$, because $o\in X(\RR)$. Let $X_o(\CC)\subset \PP^{r-1}(\CC)$ denote the closure of $\pi _o(X(\CC)\setminus
\{o\})$. The curve $X_o(\CC)$ is a rational normal curve. Since $\rho (X(\CC))=r+1$, for any $A\in \Ss (X(\CC),o,q)$
the set $A_o= \pi _o(A\setminus \{o\})$ is a linearly independent set with cardinality $r/2$ spanning $\pi
_o(q)$. Since $\sharp (A_o)=r/2 = \lfloor (r+1)/2 \rfloor$, Sylvester's theorem gives $r_{X_o(\CC)}(\pi _o(q)) = r/2 = b_{X_o(\CC)}(\pi _o(q))$ and $A_o\in \Ss(X_o(\CC),\pi_o(q))$. Therefore we are reduced to the case ({\bf a}) above: hence there exists a unique $A_o\in \Ss (X_o(\CC),\pi _o(q))$ and so it has a label. 

Now, suppose $A_o$ contains the point $X_o(\CC)\setminus \pi _o(X(\CC)\setminus \{o\})$ (i.e., the point
$\pi _o(T_oX(\CC)\setminus \{o\})$, where $T_oX(\CC)$ denote the tangent line to $X(\CC)$ at $o$). Thus $\pi_o^{-1}(A_o) = 2o \cup F_{o}\in \overline{\Ss(X(\CC), q)}\cong \PP^1(\CC)$, where $F_{o}$ is a reduced scheme. The general point of $\overline{\Ss(X(\CC), q)}$ is a reduced scheme and $X(\RR)$ is Zariski dense in $X(\CC)$. Hence there must exists $o'\in X(\RR)$ such that
$A_{o'}$ does not contain the point $X_{o'}(\CC)\setminus \pi _{o'}(X(\CC)\setminus \{o'\})$. Then there exist a unique $F_{o'}\subset X(\CC)\setminus\{o'\}$ such that $\pi_{o'}(F_{o'}) = A_{o'}$.  Since $A_{o'}$ has a label and $\pi _{o'}$ is defined over $\RR$, the uniqueness of $F_{o'}$ implies $\sigma (F_{o'}) =F_{o'}$. Thus $F_{o'}\cup \{o'\}$ has a label and $F_{o'}\cup \{o'\}\in \Ss (X(\CC),q)$.
\end{proof}

We recall that all integers between $\lfloor (r+2)/2\rfloor$ and $r$ are typical for the real degree $r$ rational normal curve; see \cite[Theorem 2.4]{b}. 
The next Corollary \ref{o2} shows a key difference between typical labels and typical ranks. It is a consequence of Theorem \ref{o1} and also 
of \cite[Theorem 1]{bb}. 

\begin{corollary}\label{o2}
Fix an integer $r\ge 2$. Let $X(\CC) \subset \PP^r(\CC)$ denote the degree $r$ rational normal curve with $X(\RR)\simeq
\PP^1(\RR)$. A label is typical for $X(\CC)$ if and only if it  has weight $\lfloor (r+2)/2\rfloor$.
\end{corollary}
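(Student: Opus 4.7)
The plan is to deduce the corollary from Theorem \ref{o1} and Proposition \ref{typicallabels}. Write $g=\lfloor (r+2)/2\rfloor$; this is the complex generic rank of the degree-$r$ rational normal curve.

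For the ``if'' direction I would apply Proposition \ref{typicallabels} directly. Since $X(\RR)\cong\PP^1(\RR)$ is smooth, $X_{\reg}(\RR)\neq\emptyset$, so every label $(a,b)$ with $2a+b=g$ is typical.

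For the converse I would argue as follows. By Theorem \ref{o1}, $\ell_{X(\CC),\sigma}(q)=r_{X(\CC)}(q)$ for every $q\in\PP^r(\RR)$. By the definition of generic rank there is a Zariski open non-empty subset $V\subseteq\PP^r(\CC)$ with $r_{X(\CC)}(q)=g$ for all $q\in V$. Set $Z=\PP^r(\CC)\setminus V$. Since $\sigma$ is the identity on $\PP^r(\RR)$, one has $Z\cap\PP^r(\RR)=(Z\cap\sigma(Z))\cap\PP^r(\RR)$, the real locus of the $\sigma$-invariant proper complex subvariety $Z\cap\sigma(Z)$. This real locus has real dimension at most $r-1$: no proper complex subvariety of $\PP^r(\CC)$ contains $\PP^r(\RR)$, as $\PP^r(\RR)$ is Zariski dense in $\PP^r(\CC)$. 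Hence $V\cap\PP^r(\RR)$ is Euclidean open and dense in $\PP^r(\RR)$, and on it $\ell_{X(\CC),\sigma}\equiv g$. If $(a,b)$ is a typical label with witnessing Euclidean open $U\subseteq\PP^r(\RR)$, density forces $U\cap V\cap\PP^r(\RR)\ne\emptyset$; choosing any $q$ in this intersection, Definition \ref{typicallab} yields $2a+b=\ell_{X(\CC),\sigma}(q)=g$.

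I do not anticipate any genuine obstacle: the content of the corollary is essentially packed into Theorem \ref{o1}, and the only mildly non-trivial ingredient is the Zariski-density argument ensuring that $V\cap\PP^r(\RR)$ is Euclidean dense. Alternatively one could cite \cite[Theorem 1]{bb}, but going through Theorem \ref{o1} keeps the argument internal to the section.
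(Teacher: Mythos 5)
Your proof is correct and takes essentially the same route as the paper: the ``if'' direction is Proposition \ref{typicallabels}, and the converse combines Theorem \ref{o1} with the Euclidean density in $\PP^r(\RR)$ of the locus where the complex rank equals $g=\lfloor (r+2)/2\rfloor$. The only cosmetic difference is how that density is justified --- the paper invokes Sylvester's theorem to identify the complement as the real part of a proper subvariety defined over $\RR$, while you derive it abstractly from the definition of generic rank via the $\sigma$-invariant closed set $Z\cap\sigma(Z)$; both arguments are valid.
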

\begin{proof}
Let $U$ denote the set of all $q\in \PP^r(\RR)$ such that $r_{X(\CC)}(q) =\lfloor (r+2)/2\rfloor$. By Sylvester's theorem
$\PP^r(\RR)\setminus U$ is the real part of a proper algebraic subvariety of $\PP^r(\CC)$ defined over $\RR$.
Thus $\PP^r(\RR)\setminus U$ is not Zariski dense in $\PP^r(\RR)$. By Theorem \ref{o1}, one has $\ell _{X(\CC),\sigma}(q)=
\lfloor (r+2)/2\rfloor$ for all $q\in U$: only labels of weight $\lfloor (r+2)/2\rfloor$ may be typical. 

The converse follows from Proposition \ref{typicallabels}. 
 \end{proof}

\begin{theorem}\label{o3}
Fix an even integer $r\ge 2$. Let $X(\CC)\subset \PP^r(\CC)$ be the linearly normal embedding of the plane curve $\Cc =
\{x^2+y^2+z^2=0\}\subset \PP^2(\CC)$. The complex curve $X(\CC)$ is a rational normal curve equipped with a real structure
such that $X(\RR)=\emptyset$. 
\begin{enumerate}
\item[(i)] For each $q\in \PP^r(\RR)$ we have $\ell _{X(\CC),\sigma}(q) = r_{X(\CC)}(q)$, $cr_{X(\CC),\sigma}(q) =
cr_{X(\CC)}(q)$ and these integers are even.

\item[(ii)] Every typical label has weight $r/2+1$.
\end{enumerate}
\end{theorem}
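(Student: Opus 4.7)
The plan is to imitate the proof of Theorem~\ref{o1} step by step, exploiting that, as a complex projective variety, $X(\CC)$ is isomorphic to the standard degree-$r$ rational normal curve; hence Sylvester's theorem applies and $\rho(X(\CC)) = r+1$. The essential new ingredient is that, since $X(\RR)=\emptyset$, the antiholomorphic involution $\sigma$ acts \emph{freely} on $X(\CC)$. Consequently, in any $\sigma$-stable zero-dimensional subscheme of $X(\CC)$ the connected components occur in conjugate pairs of equal degree, so its total degree is automatically \emph{even}. This parity observation is what produces the ``these integers are even'' clause of~(i).

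For~(i) I would split into cases by the value of $b_{X(\CC)}(q)$, mirroring cases~(a)--(b) of Theorem~\ref{o1}. When $b_{X(\CC)}(q) \le r/2$, Sylvester produces a unique scheme $Z \subset X(\CC)$ of degree $b_{X(\CC)}(q)$ with $q \in \langle Z\rangle_{\CC}$; uniqueness together with $\sigma(q)=q$ forces $\sigma(Z) = Z$, so by the parity observation $b_{X(\CC)}(q)$ is even. This at once yields $cr_{X(\CC),\sigma}(q) = cr_{X(\CC)}(q) = b_{X(\CC)}(q)$. For the rank equality I would rerun subcases~(a1) and~(a2) of Theorem~\ref{o1} without change: if $r_{X(\CC)}(q) = b_{X(\CC)}(q)$ the reduced $\sigma$-stable scheme $Z$ is itself an admissible decomposition, and otherwise I would reuse the variety $U(q) \subset \mathrm{Div}^{r+2-b_{X(\CC)}(q)}(X(\CC))$ introduced there, noting that $U(q)$ is defined over $\RR$ and that $U(q)(\RR)$ is Zariski dense, to extract a reduced $\sigma$-stable divisor of the required length.

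The only genuinely new difficulty is the case $b_{X(\CC)}(q) = r/2+1$. In Theorem~\ref{o1} one reduces dimension by projecting from a real point of $X$, which is not available here. My plan is to project instead from a $\sigma$-stable pair $\{o,\sigma(o)\} \subset X(\CC)$ chosen generically: the line $\langle o,\sigma(o)\rangle_{\CC}$ is defined over $\RR$, the projection $\pi:\PP^r(\CC)\dashrightarrow\PP^{r-2}(\CC)$ is defined over $\RR$, and the image of $X(\CC)$ under $\pi$ is a degree-$(r-2)$ rational normal curve equipped with the analogous anisotropic real structure. Since $\pi(q)$ has border rank $r/2-1$, which lies in the Sylvester uniqueness range for the projected curve, the already treated case yields a $\sigma$-stable decomposition of $\pi(q)$; appending $\{o,\sigma(o)\}$ then lifts it to an admissible decomposition of $q$ of the correct length. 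The step I expect to demand the most care is verifying that for a generic $\{o,\sigma(o)\}$ the unique apolar scheme of $\pi(q)$ avoids the image of the base locus, so that the lift is unambiguous and preserves linear independence.

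For~(ii) I would combine~(i) with the observation that the complex generic rank is $g = r/2+1$: on a Zariski open subset of $\PP^r(\CC)$ one has $r_{X(\CC)}(q) = g$, evinced by a scheme unique in the Sylvester uniqueness range. By~(i) and Zariski density of $\PP^r(\RR)$ in $\PP^r(\CC)$, on a non-empty Euclidean open subset of $\PP^r(\RR)$ this unique decomposition is $\sigma$-stable and therefore carries the label $(g/2,0)$ of weight $g$, so $(g/2,0)$ is typical. Conversely, uniqueness of the minimal admissible decomposition in that open set rules out any other label from being typical, showing that $g = r/2+1$ is the only typical weight.
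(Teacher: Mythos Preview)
Your overall architecture matches the paper's proof: split into the two border-rank regimes, rerun cases~(a), (a1), (a2) of Theorem~\ref{o1} verbatim, and in case~(b) project from the $\sigma$-stable secant line $L=\langle o,\sigma(o)\rangle_\CC$. The parity argument via the free action of $\sigma$ is also exactly what the paper uses.

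The genuine gap is in case~(b). You assert that for a generic choice of $o$ the projected point $\pi_L(q)$ has border rank $r/2-1$, placing it in the Sylvester uniqueness range for $X_L\subset\PP^{r-2}(\CC)$. This is not true in general. One has $b_{X_L(\CC)}(\pi_L(q))\ge r/2-1$ (because any apolar scheme of $\pi_L(q)$ lifts, together with $\{o,\sigma(o)\}$, to an apolar scheme of $q$), but the upper bound you get from a decomposition $A\ni o$ of $q$ is only $r/2$, since generically $\sigma(o)\notin A$. More precisely, $b_{X_L(\CC)}(\pi_L(q))=r/2-1$ if and only if some degree-$(r/2+1)$ apolar scheme of $q$ contains \emph{both} $o$ and $\sigma(o)$; as the apolar schemes form a pencil and the one through $o$ is unique, this forces that scheme to be $\sigma$-stable, which is exactly what you are trying to prove and cannot be assumed for generic $o$. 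So for generic $o$ the projection lands in case~(b) for $X_L$, not case~(a). The paper avoids this by arguing inductively on $r$, with the base case $r=2$ handled directly (intersect $\Cc$ with a real line through $q$), so that case~(b) for $X_L$ is already covered by the inductive hypothesis.

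A smaller point on~(ii): your argument invokes ``a scheme unique in the Sylvester uniqueness range'' for the generic complex rank $g=r/2+1$, but for even $r$ the generic point has a \emph{pencil} of rank-$g$ decompositions, not a unique one. Uniqueness is not needed: once~(i) is established, the equality $\ell_{X(\CC),\sigma}(q)=r_{X(\CC)}(q)=g$ on the Zariski-open locus of generic complex rank immediately forces every typical label to have weight $g$, exactly as in the proof of Corollary~\ref{o2}.
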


\begin{proof}
(i). It is clear from definitions that $\ell _{X(\CC),\sigma}(q) \ge r_{X(\CC)}(q)$ and $cr_{X(\CC),\sigma}(q) \ge
cr_{X(\CC)}(q)$. Since $X(\RR)=\emptyset$, each label has even weight. Thus, it is sufficient to prove that 
$\ell _{X(\CC),\sigma}(q) = r_{X(\CC)}(q)$, $cr_{X(\CC),\sigma}(q) =
cr_{X(\CC)}(q)$. Since $X(\CC)$ is an even degree rational normal curve (as a complex curve), we have
$b_{X(\CC)}(q) = cr_{X(\CC)}(q)$. We split the rest of the proof according to whether $b_{X(\CC)}(q)\leq r/2$ or $b_{X(\CC)}(q)=r/2+1$: 

\quad ({\bf a}) Assume $b_{X(\CC)}(q)\le r/2$. This is proven as case ({\bf a}) in the proof of Theorem \ref{o1}.

\quad ({\bf b}) Assume $b_{X(\CC)}(q) = r/2 +1$. First assume $r=2$, i.e., $X(\CC) =\Cc$. In this case, it is sufficient to take $D\cap X(\CC)$
where $D\subset \PP^2(\CC)$ is a line defined over $\RR$, containing $q$ and transversal to the smooth conic $\Cc$ (the latter
condition excludes only two lines). Now assume $r\ge 4$ and that the statement is true in lower dimensional projective spaces.
Fix $o\in X(\CC)$. Set $L= \langle o,\sigma (o)\rangle _{\CC}$. Note that the line $L$ is defined over $\RR$. Let
$\pi _L: \PP^r(\CC)\setminus L\to \PP^{r-2}(\CC)$ denote the linear projection away from $L$. Let $X_L(\CC)\subset \PP^{r-2}(\CC)$
denote the closure of $\pi _L(X(\CC)\setminus \{o,\sigma (o)\})$; $X_L(\CC)$ is a rational normal curve with a real structure
without real points. Mimic step ({\bf b}) of the proof of Theorem \ref{o1} to derive the result using the inductive assumption.

A proof similar to the one of Corollary \ref{o2} shows (ii).\end{proof}

\begin{remark}
The equality of admissible and complex ranks shown in Theorem \ref{o1} and Theorem \ref{o3} does not hold in general. 
In \cite[Example 2]{bb}, the authors gave an example of a homogeneous polynomial whose admissible rank (with respect to a Veronese variety) is strictly bigger than its complex rank. They also constructed an example of a curve such that a point of complex generic rank has higher admissible rank \cite[Example 1]{bb}; cf. with our Theorem  \ref{v1}. \end{remark}

\begin{definition}\label{boundary}
Let $X(\CC)\subset \PP^r(\CC)$ be a projective irreducible nondegenerate variety defined over $\RR$. Let $W(a,b)$ be the subset of all  $q\in \PP^r(\RR)$ such that $\ell_{X(\CC),\sigma}(q) = 2a+b$. Moreover, suppose that, for each typical label $(a,b)$ of $X(\CC)$, each $q\in W(a,b)$ admits only $(a,b)$ as a label. 

Let $(a,b)$ be a typical label of $X(\CC)$. Let $\mathcal W(a,b)=W(a,b)^{\circ}$ denote the interior of $W(a,b)$. The {\it topological boundary} $\partial(\mathcal W(a,b))$ is the set-theoretic difference between the closure of $\mathcal W(a,b)$ and the interior of the closure of $\mathcal W(a,b)$; see \cite[\S 3.1]{BrSt} or \cite[\S 1]{mmsv}. The {\it admissible rank boundary} $\partial_{\textnormal{alg}}(\mathcal W(a,b))$ is the complex Zariski closure of its topological boundary. This is a complex hypersurface in $\PP^r(\CC)$. 
\end{definition}

\begin{example}
Let $r=3$ and $X(\CC)\subset \PP^3(\CC)$ be the degree $3$ rational normal curve. It is a direct computation to see that all the points $q\in \langle Z\rangle$, where $Z\subset X(\CC)\setminus X(\RR)$ with $Z = \sigma(Z)$ and $\sharp(Z) = 2$, are binary cubics with only real roots; this implies that for every such $q$, 
$r_{X(\RR)}(q) = 3$ and $r_{X(\CC)}(q) = 2$. One can check that the tangential $\tau(X(\CC))$ is the locus of points whose real and complex ranks both equal to $3$ and coincides with the {\it real rank boundary} dividing real rank three points from real rank two points; cf. \cite[Remark 5.3]{v}.

By Corollary \ref{o2}, the typical labels are $(1,0)$ and $(0,2)$. Every $q\in W(1,0)$ satisfies $r_{X(\RR)}(q) = 3$ and every $p\in W(0,2)$ satisfies $r_{X(\RR)}(p) = 2$; they also have a unique label. Thus $\tau(X)$ coincides with the complex surface $\partial_{\textnormal{alg}}(\mathcal W(1,0))$.
\end{example}

\begin{remark}\label{component}
Let $X=X(\mathbb C)\subset \mathbb P^r(\mathbb C)$ be the rational normal curve, and let $g=\lfloor (r+2)/2\rfloor$ be the generic complex rank. 
Corollary \ref{o2} shows that the only typical labels $(a,b)$ are those whose weight is $2a+b=g$. 

For $r$ odd, we show that the hypersurface $J(\sigma_{g-2}(X),\tau(X))\subset \mathbb P^r(\mathbb C)$ is a component of the admissible rank boundary $\partial_{\textnormal{alg}}(\mathcal W(a,b)))$. To show this, we exhibit two sequences of points $p_{\epsilon}\in W(a,b)$ and $p'_{\epsilon}\in W(a-1,b+2)$ 
limiting to the same point $q\in J_{\RR}(\sigma_{g-2}(X)(\RR),\tau(X(\RR)))$. 

Fix $p\in W(a-1,b)$ and, for $\epsilon \in \mathbb R$, let
\[
\ell_\epsilon = \frac{1}{\epsilon}\left((iu+\epsilon v)^r + (-iu+\epsilon v)^r\right)\in \sigma_2(X), \mbox{ where } u,v\in X(\mathbb R),
\]
so that $p_{\epsilon} = p + \ell_\epsilon \in W(a,b)$. 

As above take the same $p\in W(a-1,b)$ and, for $\epsilon \in \mathbb R$, let
\[
\ell'_\epsilon = \frac{1}{\epsilon}\left((u+\epsilon v)^r - u^r\right) \in \sigma_2(X), \mbox{ where } u,v\in X(\mathbb R),
\]
so that $p'_{\epsilon} = p + \ell'_\epsilon \in W(a-1,b+2)$. 
Note that for $\epsilon\rightarrow 0$ both $\ell_{\epsilon}$ and $\ell'_{\epsilon}$ converge in projective space  to the point $u^{r-1}v\in \tau(X(\RR))$. 

Therefore $p_{\epsilon}$ and $p'_{\epsilon}$ converge to the same point $q\in J_{\RR}(\sigma_{g-2}(X)(\RR),\tau(X(\RR)))$. The set of such $q$ contains a Zariski open set of $J_{\RR}(\sigma_{g-2}(X)(\RR),\tau(X(\RR)))$ and so its complex Zariski closure is a component of the admissible rank boundary. The closure of this real join is the hypersurface $J(\sigma_{g-2}(X),\tau(X))\subset \mathbb P^r(\CC)$. 
\end{remark}

We conclude this section with an explicit example of a point having more than one label, i.e., a point having distinct {\it minimal} admissible decompositions with  distinct labels. Already when $X(\CC)$ is an even degree rational normal curve, a point may have more than one label with the same minimal weight: 

\begin{example}\label{twolabels}
Let $\CC[x,y]_4$ be the homogeneous coordinate ring of $\PP^4(\CC)$ where the rational normal curve of degree four $X(\CC)$ is embedded.

Let $q = \frac{3}{5}x^4+\frac{7}{5}x^3y+\frac{4}{3}x^2y^2+\frac{5}{4}xy^3+y^4\in \CC[x,y]_4$. Its apolar ideal is $\textnormal{Ann}(q) = (12915x^2y-29088xy^2+6220y^3,1435x^3-5652xy^2+1264y^3)$; see \cite[Definition 1.11]{IK} for background on apolarity and apolar ideals. The pencil $\phi(x,y,\lambda) = 12915x^2y-29088xy^2+6220y^3 + \lambda(1435x^3-5652xy^2+1264y^3) \subset \textnormal{Ann}(q)$ gives the $\PP^1$ of rank (real or complex) decompositions of $q$. We consider the univariate polynomial $\tilde{\phi}(t) = \phi(t,1,\lambda)$ and compute with \texttt{Macaulay2} \cite{M2} its discriminant $\mathrm{Disc}(\phi)$, which is the following polynomial in $\lambda$:
\[
\mathrm{Disc}(\phi)(\lambda) = -1359731348267443200\lambda^5-18287158078605830400\lambda^4+
 \]
 \[
-88301578772786601600\lambda^3-179185496017480948800\lambda^2-125609767833135474000\lambda.
\]
This has five real roots and divides $\PP^1(\RR)$ (the local coordinate being $\lambda$) into a union of intervals according to the behavior of the roots of  $\tilde{\phi}(t)$. On each of these connected components, the number of real roots of $\tilde{\phi}(t)$ is constant. One can check that there are intervals where $\tilde{\phi}(t)$ has three distinct reals roots and intervals where it has one real and two complex conjugate roots. Since the roots of $\tilde{\phi}(t)$ correspond to the summands of a $X(\CC)$-rank decomposition of $q$, the real point $q$ has both labels $(1,1)$ and $(0,3)$. 
\end{example}

\section{Real joins and typical labels}

We now shift gears to {\it real joins}. To introduce them, we equip $\PP^r(\RR)$ and $\RR^{r+1}$ with the usual Euclidean topology. Let $A, B\subseteq \PP^r(\RR)$ be non-empty semialgebraic subsets. The \emph{strict real join}
$J^0_{\RR}(A,B)$ of $A$ and $B$ is the subset $A+B$, where $+: \RR^{r+1}\times \RR^{r+1}\to \RR^{r+1}$ denotes the addition (we denote with the same
symbol the induced map on $\PP^r(\RR)$). The \emph{real join} $J_{\RR}(A,B)$ of $A$ and $B$ is the closure in $\PP^r(\RR)$ of $J^0_{\RR}(A,B)$. By \cite[Propositions 2.2.7 and 2.2.2]{bcr}, $J^0_{\RR}(A,B)$ and $J_{\RR}(A,B)$ are semialgebraic. Note that $J_{\RR}(A,B)
=J_{\RR}(\overline{A},\overline{B})$.
\begin{remark}\label{eu2}
Let $A, B, B' \subset \PP^r(\RR)$ be semialgebraic subsets. Then: 
\begin{enumerate}
\item[(i)] We have $J^0_{\RR}(A,B) =J^0_{\RR}(B,A)$ and hence $J_{\RR}(A,B) =J_{\RR}(B,A)$.

\item[(ii)]  We have $J_{\RR}(A,B) =J_{\RR}(\overline{A},\overline{B})$.

\item[(iii)]  We have $J^0_{\RR}(A,B\cup B') =J^0_{\RR}(A,B)\cup J^0_{\RR}(A,B')$ and hence $J_{\RR}(A,B\cup B')
=J_{\RR}(A,B)\cup J_{\RR}(A,B)$. Hence we may use a finite decomposition of $A$ and $B$ into semialgebraic subsets
homeomorphic to hypercubes to determine the real joins.

\item[(iv)]  By real dimension count, $\dim J_{\RR}(A,B)\le \min \{r,\dim A+\dim B+1\}$.

\item[(v)]  Let $X(\CC), Y(\CC)\subset \PP^r(\CC)$ be integral projective varieties defined over $\RR$. It is clear that
the usual complex join satisfies $J(X,Y)\cap \PP^r(\RR) \supseteq J_{\RR}(X(\RR),Y(\RR))$, but often the strict inequality holds. (Example: $r=3$ and
$X(\CC)=Y(\CC)$ be the rational normal curve.)
\end{enumerate}
\end{remark}

\begin{lemma}\label{aaa2}
Fix a closed non-empty (not necessarily equidimensional) semialgebraic set $B\subseteq \PP^r(\RR)$. Assume the existence of a semialgebraic set $S\subseteq \PP^r(\RR)$ such that $\langle S\rangle _{\RR} =\PP^r(\RR)$ and $J^0_{\RR}(S, B) \subseteq B$. Then $B = \PP^r(\RR)$.
\end{lemma}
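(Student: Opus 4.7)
The plan is to use $\langle S\rangle_{\RR}=\PP^r(\RR)$ to extract a projective basis $s_0,\dots,s_r\in S$ from $S$, fix any single point $b_0\in B$, and then bootstrap a chain of projective subspaces into $B$ by iteratively joining with one of the $s_i$ at a time, taking Euclidean closures at each step.

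First I would note that since $\langle S\rangle_{\RR}=\PP^r(\RR)$, the union of lifts of points of $S$ spans $\RR^{r+1}$ as a real vector space, so we can choose $s_0,\dots,s_r\in S$ whose lifts form a basis of $\RR^{r+1}$; in particular $\langle s_0,\dots,s_r\rangle_{\RR}=\PP^r(\RR)$. Next, pick any $b_0\in B$ (possible since $B\neq\emptyset$). For $k=-1,0,1,\dots,r$, let
\[
V_{-1}:=\{b_0\}, \qquad V_k:=\langle b_0,s_0,\dots,s_k\rangle_{\RR} \ \text{ for } k\ge 0.
\]
I claim by induction on $k$ that $V_k\subseteq B$ for every $-1\le k\le r$. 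The base case $k=-1$ is immediate. For the inductive step, assume $V_k\subseteq B$. Monotonicity of the strict real join in both arguments (\cf\ Remark \ref{eu2}) together with the hypothesis $J^0_{\RR}(S,B)\subseteq B$ yields
\[
J^0_{\RR}(\{s_{k+1}\},V_k)\ \subseteq\ J^0_{\RR}(S,B)\ \subseteq\ B.
\]
A direct computation with lifts in $\RR^{r+1}$ shows that the Euclidean closure of $J^0_{\RR}(\{s_{k+1}\},V_k)$ in $\PP^r(\RR)$ equals $\langle V_k,s_{k+1}\rangle_{\RR}=V_{k+1}$: every point of $V_{k+1}$ is of the form $[\lambda \tilde v+\mu \tilde s_{k+1}]$ with $\tilde v$ a lift of some $v\in V_k$ and $(\lambda,\mu)\neq(0,0)$, and for a dense set of such choices one has $\lambda,\mu\neq 0$. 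Because $B$ is closed in $\PP^r(\RR)$, it follows that $V_{k+1}\subseteq B$.

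Taking $k=r$ gives $V_r=\langle b_0,s_0,\dots,s_r\rangle_{\RR}\supseteq \langle s_0,\dots,s_r\rangle_{\RR}=\PP^r(\RR)$, hence $B=\PP^r(\RR)$. There is no real obstacle here; the only points deserving care are the extraction of a projective basis from a set with full span, and the verification that the closure of $J^0_{\RR}(\{s_{k+1}\},V_k)$ is exactly the projective span $V_{k+1}$, both of which are routine linear-algebraic facts once one works with lifts to $\RR^{r+1}$. Crucially, the closedness of $B$ is used at every inductive step, since the strict join may fail to contain finitely many boundary points of each incremented span.
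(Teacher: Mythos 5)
Your proof is correct and follows essentially the same strategy as the paper's: extract $r+1$ spanning points of $S$ and inductively push an increasing flag of projective subspaces into $B$ by joining with one point of $S$ at a time, using $J^0_{\RR}(S,B)\subseteq B$ and the closedness of $B$ at each step. The only (harmless, slightly streamlining) difference is that you anchor the flag at a point $b_0\in B$, whereas the paper first proves $S\subseteq B$ (by joining $x\in S$ with some $y\in B$ and closing up the line) and then inducts on the spans $\langle\{p_0,\dots,p_i\}\rangle_{\RR}$ of points of $S$ alone.
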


\begin{proof}
We first prove that $S\subseteq B$. Fix $x\in S$ and assume $x\notin B$. Since $B\ne \emptyset$, there is $y\in B$. By assumption and passing to closure, $B$ contains the real line $\langle \{x,y\}\rangle$. Hence $x\in B$, a contradiction. Since $S$ spans $\PP^r(\RR)$, there are $p_0,\dots ,p_r\in S$ linearly independent and spanning $\PP^r(\RR)$. We have shown $p_i\in B$ for all $i$. To finish the proof, it is sufficient to prove that $\langle \{p_0,\dots ,p_i\}\rangle _{\RR} \subseteq B$ for all $i=0,\dots ,r$. This is true for $i=0$, because $S\subseteq B$.
We use induction on $i$. Fix $i<r$ and assume $\langle \{p_0,\dots ,p_i\}\rangle _{\RR} \subseteq B$. Since $p_{i+1}\in S$, we have $J^0_{\RR}(\{p_{i+1}\}, B)
\subseteq B$ and in particular $J^0_{\RR}(\{p_{i+1}\},\langle \{p_0,\dots ,p_i\}\rangle _{\RR})\subseteq B$. Taking closure yields the statement. \end{proof}

To conclude, we prove a result that seems to have a similar flavor (although over $\RR$ and with real joins) of \cite[Theorem 3.1]{bhmt}, but weaker:

\begin{theorem}\label{eu1}
Assume $X(\RR)=\emptyset$. Let $g = r_{\gen}(X(\CC))$ and $\gamma = 2\lceil g/2\rceil$. Let $m$ be the maximum of all $\ell
_{X(\CC),\sigma}(q)$ for $q\in \PP^r(\RR)$. Then $J_{\RR}(W_{2,\sigma}(X(\CC)), W_{k,\sigma}(X(\CC)))\subseteq W_{k-2,\sigma}(X(\CC))\cup W_{k+2,\sigma}(X(\CC))$ for all
even $k$ such that $m\ge k\ge \gamma +2$. \end{theorem}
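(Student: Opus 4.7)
The plan is to fix $s$ in the strict real join, derive the two-sided estimate $k-2 \leq \ell _{X(\CC),\sigma}(s) \leq k+2$ by combining and subtracting admissible decompositions, exclude the middle value $\ell _{X(\CC),\sigma}(s) = k$ on a Euclidean dense subset using the hypothesis $k \geq \gamma + 2$, and finally pass to Euclidean closures.

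First I would fix $s \in J^0_{\RR}(W^0_{2,\sigma}(X(\CC)), W^0_{k,\sigma}(X(\CC)))$ and write $s = p + q$ with $p \in W^0_{2,\sigma}(X(\CC))$ and $q \in W^0_{k,\sigma}(X(\CC))$. Since $X(\RR) = \emptyset$, every admissible decomposition is a disjoint union of complex-conjugate pairs and every admissible rank is even. Let $A = \{a, \sigma(a)\}$ and $B = \{b_1, \sigma(b_1), \ldots, b_{k/2}, \sigma(b_{k/2})\}$ evince $\ell _{X(\CC),\sigma}(p) = 2$ and $\ell _{X(\CC),\sigma}(q) = k$. For generic $(p, q)$ one has $A \cap B = \emptyset$, so $A \cup B$ is $\sigma$-stable of cardinality $k+2$ with $s \in \langle A \cup B \rangle _{\RR}$; this yields $\ell _{X(\CC),\sigma}(s) \leq k+2$. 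Conversely, if $S$ evinces $\ell _{X(\CC),\sigma}(s)$, then $S \cup A$ is $\sigma$-stable of size at most $\ell _{X(\CC),\sigma}(s) + 2$, and $q = s - p \in \langle S \cup A \rangle _{\RR}$; hence $k = \ell _{X(\CC),\sigma}(q) \leq \ell _{X(\CC),\sigma}(s) + 2$, so $\ell _{X(\CC),\sigma}(s) \geq k-2$. Parity then forces $\ell _{X(\CC),\sigma}(s) \in \{k-2,\, k,\, k+2\}$.

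Next I would exclude the value $\ell _{X(\CC),\sigma}(s) = k$ on a Euclidean open dense subset of the real join. The hypothesis $k \geq \gamma + 2$ is essential here: since $X(\RR) = \emptyset$ forces the generic admissible rank on $\PP^r(\RR)$ to equal $\gamma$, the set $W^0_{k,\sigma}(X(\CC))$ is semialgebraic of positive codimension in $\PP^r(\RR)$. If $\ell _{X(\CC),\sigma}(s) = k$ held on a Euclidean open subset $U \subseteq J^0_{\RR}(W^0_{2,\sigma}(X(\CC)), W^0_{k,\sigma}(X(\CC)))$, then each $s \in U$ would carry a $\sigma$-stable decomposition $S$ of size $k$; varying $(p, q)$ in a small neighborhood and using $q \in \langle S \cup A\rangle_{\RR}$ together with a rigidity statement (in the spirit of Remark \ref{e0}) for $\sigma$-stable decompositions of points lying beyond the generic admissible rank would confine $q$ to a semialgebraic subfamily of strictly smaller dimension than $W^0_{k,\sigma}(X(\CC))$, contradicting the freedom of $q$. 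The main obstacle is making this dimension count rigorous: one must track how $\sigma$-stable decompositions of points on the real join can deform and show that the constraint $k > \gamma$ prevents coherent deformation in an open set.

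Finally, passing to closure: since $J_{\RR}(W_{2,\sigma}(X(\CC)), W_{k,\sigma}(X(\CC)))$ is the Euclidean closure of $J^0_{\RR}(W^0_{2,\sigma}(X(\CC)), W^0_{k,\sigma}(X(\CC)))$, and because the latter is contained in $W^0_{k-2,\sigma}(X(\CC)) \cup W^0_{k+2,\sigma}(X(\CC))$ on a Euclidean dense subset by the previous step, taking closures yields the desired inclusion $J_{\RR}(W_{2,\sigma}(X(\CC)), W_{k,\sigma}(X(\CC))) \subseteq W_{k-2,\sigma}(X(\CC)) \cup W_{k+2,\sigma}(X(\CC))$.
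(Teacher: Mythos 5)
Your first step is correct and coincides with the paper's setup: $S\cup A$ is $\sigma$-stable of cardinality at most $k+2$ (no genericity is needed here -- even if $A\cap B\ne \emptyset$ the bound $\ell_{X(\CC),\sigma}(s)\le k+2$ still holds), the relation $q\in \langle S\cup A\rangle_{\RR}$ gives $\ell_{X(\CC),\sigma}(s)\ge k-2$ exactly as in the paper's remark that $s$ cannot have a label $(a,0)$ with $a<(k-2)/2$, and parity leaves $\{k-2,k,k+2\}$. The closure step at the end is also unobjectionable. The genuine gap is precisely where you flag it yourself: the exclusion of the middle value $k$, and the route you sketch cannot be repaired. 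A rigidity statement ``in the spirit of Remark \ref{e0}'' requires $\rho'(X(\CC))\ge 2k$, which is not among the hypotheses and is structurally unavailable in this regime: for $k\ge \gamma+2>g$ one has $\sigma_k(X(\CC))=\PP^r(\CC)$, so every point admits a positive-dimensional family of cardinality-$k$ decompositions and there is no uniqueness to exploit. Moreover, your assertion that $W^0_{k,\sigma}(X(\CC))$ has positive codimension because the generic admissible rank equals $\gamma$ is unjustified: the entire phenomenon of typical ranks and typical labels (cf.\ Theorem \ref{v1}, where the weight $g+1$ is typical on a full-dimensional set) is that semialgebraic rank strata above the generic value can contain Euclidean open sets, and nothing in the hypotheses of Theorem \ref{eu1} excludes this for $W^0_{k,\sigma}(X(\CC))$. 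So the dimension count on which your contradiction rests has no foundation.

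The paper's actual mechanism is entirely different and avoids any dimension count. After decomposing $W^0_{2,\sigma}(X(\CC))$ and $W^0_{k,\sigma}(X(\CC))$ into semialgebraic pieces $E$ and $F$ homeomorphic to hypercubes, one assumes for contradiction that points of $J^0_{\RR}(E,F)$ carry $(k/2,0)$ as their minimal label. This yields the absorption property $J_{\RR}\bigl(E, W_{k,\sigma}(X(\CC))\bigr)\subseteq W_{k,\sigma}(X(\CC))$, and since $E$ spans $\PP^r(\RR)$, Lemma \ref{aaa2} (the iterated-join lemma: a closed semialgebraic set absorbing joins with a spanning set must be all of $\PP^r(\RR)$) forces $W_{k,\sigma}(X(\CC))=\PP^r(\RR)$. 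This is absurd: since $\gamma\le k-2$ is a typical weight by Proposition \ref{typicallabels}, there is a nonempty Euclidean open set on which the admissible rank is $\gamma\ne k$, which is disjoint from $\{q\mid \ell_{X(\CC),\sigma}(q)=k\}$ and hence contradicts the density of that set implied by $W_{k,\sigma}(X(\CC))=\PP^r(\RR)$. (The boundary case $k=m$ is handled separately, using maximality of $m$ in place of the union bound $\ell\le k+2$.) If you want to salvage your write-up, replace your rigidity paragraph with this absorption argument; everything else you wrote can stay.
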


\begin{proof}
Since $X(\RR)=\emptyset$, all labels have even weight and they are of type $(a,0)$ with weight $2a$. Recall that all typical
weights are $\ge \gamma$ by Proposition \ref{typicallabels}.

Decompose the semialgebraic sets $W^0_{2,\sigma}(X(\CC))$ and $W^0_{k,\sigma}(X(\CC))$ into a finite union of semialgebraic sets $E_i$ and $F_j$ respectively, where each of them is homeomorphic to a hypercube (possibly of different Hausdorff dimension); see Remark \ref{eu0}. 

Fix $i,j$, and set $E = E_i, F = F_j$. Thus $\ell _{X(\CC),\sigma}(q)=2$, for all $q\in E$, and $\ell _{X(\CC),\sigma}(q)=k$,
for all $q\in F$. Consider the real join $J^0_{\RR}(E,F)$ and decompose it into a finite union of semialgebraic sets $G_k$, as before. 
Fix $k$, and set $G = G_k$. We show $G\subset W_{k-2,\sigma}(X(\CC))\cup W_{k+2,\sigma}(X(\CC))$. This would imply $J_{\RR}(E,F)\subset W_{k-2,\sigma}(X(\CC))\cup W_{k+2,\sigma}(X(\CC))$ and, upon taking the union, the desired inclusion $J_{\RR}(W_{2,\sigma}(X(\CC)), W_{k,\sigma}(X(\CC)))\subseteq W_{k-2,\sigma}(X(\CC))\cup W_{k+2,\sigma}(X(\CC))$ (unless $k=m$, as in the statement).

Fix an arbitrary $q\in G$, with $q\in \langle \{q_1,q_2\}\rangle _{\RR}$, and where $q_1\in E$ and $q_2\in F$. Notice that $q$ cannot have label $(a,0)$ with $a< (k-2)/2$, because $\ell_{X(\CC),\sigma}(q_2) = k$. We show that $q$ has $((k-2)/2,0)$ or $((k+2)/2,0)$ as one of its labels of minimal weight; we split the proof into cases according to the value of $k$: \\
\indent \quad ({\bf a}) Assume $k=m$. Suppose $q$ does not have label $((k-2)/2,0)$. Since $m$ is maximal, $q$ has label $(k/2,0)$.
Thus $J^0_{\RR}(E,F)\subseteq W_{k,\sigma}(X(\CC))$. Taking the union over the $F_j$ and closure, this yields $J_{\RR}(E,W_{k,\sigma}(X(\CC)))\subseteq W_{k,\sigma}(X(\CC))$. Since $E$ spans $\PP^r(\RR)$, Lemma \ref{aaa2} implies $W_{k,\sigma}(X(\CC)) =\PP^r(\RR)$. This is in contradiction with 
the assumption $k\ge \gamma +2$ and the statement of Proposition \ref{typicallabels}, i.e., $\gamma$ is a typical weight-label. \\
\indent \quad ({\bf b}) Assume $\gamma +2\le k \le m-2$. We show that each element of $G$ has $((k-2)/2,0)$ or $((k+2)/2,0)$ as one of its labels. Otherwise, suppose $(k/2,0)$ is a label of $q$. Then $J_{\RR}(E,F)\subseteq W_{k,\sigma}(X(\CC))$.
Thus $J_{\RR}(E, W_{k,\sigma}(X(\CC))) \subseteq W_{k,\sigma}(X(\CC))$. As before, since $E$ spans $\PP^r(\RR)$, Lemma \ref{aaa2} provides
a contradiction. 
\end{proof}

Dropping the assumption $X(\RR)=\emptyset$, with the same arguments as above, we obtain:

\begin{theorem}\label{eu4}
Let $g = r_{\gen}(X(\CC))$ and $\gamma = 2\lceil g/2\rceil$. Let $m $ be the maximum of all $\ell_{X(\CC),\sigma}(q)$,
$q\in\PP^r(\RR)$, with $q$ having a label $(a,0)$. Let $W_{2a}(a,0)$ be the closure of the set of all $q\in \PP^r$ with $\ell
_{X(\CC),\sigma}(q)=2a$ and $(a,0)$ as one of their labels. Fix an even integer $k$ such that $m \ge k\ge \gamma +2$. Then
$J_{\RR}(W_2(1,0), W_k(k/2,0))\subseteq W_{k-2}((k-2)/2,0)\cup W_{k+2}((k+2)/2,0)$. \end{theorem}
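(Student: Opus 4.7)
The plan is to mimic the proof of Theorem~\ref{eu1} with careful tracking of labels of the specific form $(a,0)$. The crucial additive identity $(1,0)+(k/2,0)=((k+2)/2,0)$ still holds, so the combinatorics of that proof carry over; the only novelty is that $X(\RR)\neq\emptyset$ now permits other labels $(a,b)$ with $b>0$ to appear elsewhere on $\PP^r(\RR)$, and these must be explicitly ruled out on the join.

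First, I would decompose $W^0_{2,\sigma}(X(\CC))_{(1,0)}$ and $W^0_{k,\sigma}(X(\CC))_{(k/2,0)}$ into finitely many semialgebraic cells $E_i$ and $F_j$ homeomorphic to open hypercubes (Remark~\ref{eu0}), then further cell-decompose $J^0_{\RR}(E_i,F_j)$ into cells $G_\ell$. Fix such a cell $G=G_\ell$. For $q\in G$, writing $q = q_1 + q_2$ with $q_1\in E_i$ and $q_2\in F_j$, a witnessing decomposition $S_1=\{p,\sigma(p)\}$ of $q_1$ together with a witness $S_2$ consisting of $k/2$ conjugate pairs for $q_2$ combine (generically disjointly) to give a decomposition $S_1\cup S_2$ of $q$ of size $k+2$ and label $((k+2)/2,0)$. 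Hence $((k+2)/2,0)$ is one of the labels of $q$ and $\ell_{X(\CC),\sigma}(q)\le k+2$. Symmetrically, combining any minimal decomposition of $q$ with $S_1$ yields a decomposition of $q_2 = q-q_1$ of size $\le \ell_{X(\CC),\sigma}(q)+2$, and since $\ell_{X(\CC),\sigma}(q_2)=k$ this gives $\ell_{X(\CC),\sigma}(q)\ge k-2$.

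To rule out the intermediate ranks $\ell_{X(\CC),\sigma}(q)\in\{k-1,k,k+1\}$ I would mirror cases (a) and (b) of Theorem~\ref{eu1}. Suppose, for contradiction, that on some full-dimensional cell $G$ every point satisfies $\ell_{X(\CC),\sigma}(q)=k$. Then $G\subseteq W_{k,\sigma}(X(\CC))$; passing to unions over cells and taking closures yields $J_{\RR}(W_2(1,0),W_k(k/2,0))\subseteq W_{k,\sigma}(X(\CC))$. I would check that $W_2(1,0)$ real-linearly spans $\PP^r(\RR)$ by noting that it contains the real lines $\langle p,\sigma(p)\rangle_{\RR}$ for all $p\in X(\CC)\setminus X(\RR)$; the complex Zariski closure of this union is $\sigma$-stable and contains $X(\CC)$, hence equals $\PP^r(\CC)$ by nondegeneracy, which forces the real linear span of $W_2(1,0)$ to be all of $\PP^r(\RR)$. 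Lemma~\ref{aaa2} then gives $W_{k,\sigma}(X(\CC))=\PP^r(\RR)$, contradicting the existence of the non-empty Euclidean open set of typical weight $\gamma\le k-2$ supplied by Proposition~\ref{typicallabels}. The same reasoning rules out $\ell_{X(\CC),\sigma}(q) = k\pm 1$, so generically on $G$ we have $\ell_{X(\CC),\sigma}(q)\in\{k-2,k+2\}$.

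When $\ell_{X(\CC),\sigma}(q)=k+2$, the label $((k+2)/2,0)$ coming from $S_1\cup S_2$ is of minimal weight, placing $q\in W_{k+2}((k+2)/2,0)$. When $\ell_{X(\CC),\sigma}(q)=k-2$, a minimal decomposition $T$ of $q$ has label $(a,b)$ with $2a+b=k-2$, and $T\cup S_1$ provides a decomposition of $q_2$ of size at most $k$; since $\ell_{X(\CC),\sigma}(q_2)=k$, the decomposition $T\cup S_1$ is minimal for $q_2$ and has label $(a+1,b)$. Matching this against the label $(k/2,0)$ of $q_2$ on a generic point of the cell $F_j$ (where by o-minimal cell decomposition the constellation of minimal labels is constant) forces $b=0$ and $a=(k-2)/2$, i.e., $q\in W_{k-2}((k-2)/2,0)$. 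Closing and unioning over cells gives the desired inclusion. The main obstacles are (i) the real-linear span claim for $W_2(1,0)$, which rests on nondegeneracy of $X(\CC)$, and (ii) the label-matching step on the $k-2$ branch, which requires generic constancy of labels along each cell; both are expected to be routine given the cellular decomposition and Lemma~\ref{aaa2}.
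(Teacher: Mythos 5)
Your overall route is the right one, and indeed the only one available for comparison: the paper gives no separate proof of Theorem~\ref{eu4}, stating only that it follows ``with the same arguments'' as Theorem~\ref{eu1} once attention is restricted to the sets $W_{2a}(a,0)$. Your cell decomposition via Remark~\ref{eu0}, the two-sided bound $k-2\le \ell_{X(\CC),\sigma}(q)\le k+2$ obtained from $S_1\cup S_2$ and $T\cup S_1$, and the contradiction through Lemma~\ref{aaa2} against the typical weight $\gamma$ of Proposition~\ref{typicallabels} are exactly that adaptation. Your spanning argument for $W_2(1,0)$ is in fact more careful than the corresponding step in the paper (which asserts that a single cell $E$ spans $\PP^r(\RR)$): each real line $\langle p,\sigma(p)\rangle_{\RR}$ is Zariski dense in its complex line, these lines sweep out a $\sigma$-stable set containing the Zariski-dense subset $X(\CC)\setminus X(\RR)$, and nondegeneracy then forces the real span to be all of $\PP^r(\RR)$. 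That part is a genuine repair.

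However, two steps fail at precisely the points where $X(\RR)\ne\emptyset$ introduces new phenomena. First, ``the same reasoning rules out $\ell_{X(\CC),\sigma}(q)=k\pm 1$'' is unjustified: Lemma~\ref{aaa2} requires join-stability $J^0_{\RR}(S,B)\subseteq B$ of one \emph{fixed} set $B$, and the contradiction hypothesis $\ell\equiv k+1$ on the join only yields $J_{\RR}(W_2(1,0),W_k(k/2,0))\subseteq W_{k+1,\sigma}(X(\CC))$, where source and target differ, so no iteration is possible and no contradiction arises; even your $\ell\equiv k$ case only closes if the joined points again carry the label $(k/2,0)$ (so that $B=W_k(k/2,0)$ is genuinely self-stable), which is the very point in question. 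Second, and more seriously, the label-matching step on the $k-2$ branch is a non sequitur: from $T\cup S_1$ being a \emph{minimal} decomposition of $q_2$ with label $(a+1,b)$ you cannot conclude $(a+1,b)=(k/2,0)$, because a point may carry several distinct minimal labels simultaneously --- the paper's own Example~\ref{twolabels} exhibits a point of the quartic rational normal curve with minimal labels $(1,1)$ and $(0,3)$. Constancy of the semialgebraic label data along the cell $F_j$ gives constancy of the \emph{set} of minimal labels of $q_2$, not that this set is a singleton, so $b>0$ cannot be excluded this way. Showing that generic points of the join with $\ell=k-2$ possess $((k-2)/2,0)$ among their labels, and that the odd intermediate weights occupy no Euclidean open subset of the join, is exactly the new content that the adaptation of Theorem~\ref{eu1} demands, and your proposal does not supply it.
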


\begin{small}

\end{small}

\end{document}